\documentclass[twoside,11pt]{amsart}
\usepackage{mabliautoref}
\usepackage{amsmath,amsfonts,amssymb,amsthm,mathtools,mathrsfs}
\usepackage[a4paper,margin=1.1in]{geometry}  
\usepackage{lmodern}
\usepackage[utf8]{inputenc}
\usepackage[T1]{fontenc}
\usepackage[shortlabels]{enumitem}

\RequirePackage[dvipsnames]{xcolor}
\usepackage{tikz-cd}
\usepackage{chngcntr}
\usepackage{fdsymbol}
\usepackage{minibox}
\usepackage[framemethod=TikZ]{mdframed}
\usepackage{ifthen}
\usepackage{relsize}
\usepackage[most]{tcolorbox}
\usepackage{etoolbox}

\hypersetup{
bookmarksdepth=3,
bookmarksopen,
bookmarksnumbered,
pdfstartview=FitH,
colorlinks,%backref,hyperindex,
linkcolor=Sepia,
anchorcolor=BurntOrange,
citecolor=OliveGreen,
filecolor=BlueViolet,
menucolor=Yellow,
urlcolor=OliveGreen
}

\makeatletter
\renewcommand\subsection{
  \renewcommand{\sfdefault}{pag}
  \@startsection{subsection}%
  {2}{0pt}{.8\baselineskip}{.4\baselineskip}{\raggedright
 \sffamily\itshape\small\bfseries
  }}
\renewcommand\section{
  \renewcommand{\sfdefault}{phv}
  \@startsection{section} %
  {1}{0pt}{\baselineskip}{.8\baselineskip}{\centering
    \sffamily
    \scshape
    \bfseries
}}
\makeatother

\usetikzlibrary{decorations.markings}
\tikzset{degil/.style={
  decoration={markings,
  mark= at position 0.5 with {
  \node[transform shape] (tempnode) {$\backslash$};
  \draw[thick] (tempnode.north east) -- (tempnode.south west);
  }}, postaction={decorate}
}}
\usetikzlibrary{decorations.text, calc, matrix, arrows, decorations.pathreplacing}
\usetikzlibrary{shapes.geometric, arrows, positioning, fit, calc,decorations.pathmorphing,shapes}
\tikzstyle{e} = [ellipse, minimum width=1cm, minimum height=0.5cm,text centered, draw=black]%, fill=red!30]
\tikzstyle{arrow} = [thick,-,>=stealth]
\tikzstyle{connection}=[inner sep=0,outer sep=0]

\tikzset{
    vertl/.style={anchor=south, rotate=90, inner sep=.4mm, pos=0.4}
}

\tikzset{
    vertr/.style={anchor=north, rotate=90, inner sep=.7mm, pos=0.4}
}

\tikzset{
    diag/.style={anchor=south, rotate=30, inner sep=.5mm}
}

\tikzset{
    ddiag/.style={anchor=south, rotate=-16, inner sep=0mm}
}

\newcommand{\bA}{\mathbb{A}}
\newcommand{\bC}{\mathbb{C}}

\newcommand{\bG}{\mathbb{G}}
\newcommand{\bK}{\mathbb{K}}

\newcommand{\bN}{\mathbb{N}}
\newcommand{\bP}{\mathbb{P}}
\newcommand{\bQ}{\mathbb{Q}}

\newcommand{\bT}{\mathbb{T}}
\newcommand{\bZ}{\mathbb{Z}}

\newcommand{\cL}{\mathcal{L}}

\newcommand{\cO}{\mathcal{O}}

\newcommand{\sZ}{\mathscr{Z}}

\newcommand{\fm}{\mathfrak{m}}

\newcommand{\tU}{\widetilde{U}}

\newcommand{\tX}{\widetilde{X}}

\DeclareMathOperator{\Alb}{Alb}

\DeclareMathOperator{\codim}{codim}
\DeclareMathOperator{\Pic}{Pic}

\DeclareMathOperator{\red}{red}

\DeclareMathOperator{\Spec}{Spec}

\DeclareMathOperator{\Diff}{Diff}

\DeclareMathOperator{\ord}{ord}
\DeclareMathOperator{\Supp}{Supp}

\DeclareMathOperator{\GL}{{GL}}
\DeclareMathOperator{\Isom}{Isom}
\DeclareMathOperator{\pr}{{pr}}
\DeclareMathOperator{\alb}{{alb}}
\DeclareMathOperator{\dlt}{{dlt}}
\DeclareMathOperator{\sm}{sm}
\DeclareMathOperator{\id}{{id}}
\DeclareMathOperator{\Aut}{Aut}
\DeclareMathOperator{\Bl}{Bl}

\newcommand{\expl}[2]{\underset{\mathclap{\minibox[c]{$\uparrow$\\ \fbox{\footnotesize #2}}}}{#1}}
\newcommand{\explshift}[3]{\underset{\mathclap{\minibox[c]{$\uparrow$\\ \hspace{#1} \fbox{\footnotesize #3}}}}{#2}}

\newcommand{\stacksproj}[1]{{\cite[Tag~\href{http://stacks.math.columbia.edu/tag/#1}{#1}]{stacks-project}}}

\newcommand{\wt}{\widetilde}
\newcommand{\Samuel}[2]{
\ifthenelse{\equal{#2}{no}}
  {% true case
   \nu_{\fm\mid#1}
  }
  {% false case
   \nu_{\fm\mid I}(#2)
  }
}
\newcommand{\Samuell}[3]{
\ifthenelse{\equal{#3}{no}}
  {% true case
   \nu_{#1\mid#2}
  }
  {% false case
   \nu_{#1\mid#2}(#3)
  }
}
\newcommand{\aSamuel}[2]{
\ifthenelse{\equal{#2}{no}}
  {% true case
   \mathop{\overline{\nu}_{\fm\mid#1}}
  }
  {% false case
   \mathop{\overline{\nu}_{\fm\mid#1}}(#2)
  }
}
\newcommand{\aSamuell}[3]{
\ifthenelse{\equal{#3}{no}}
  {% true case
   \mathop{\overline{\nu}_{#1\mid#2}}
  }
  {% false case
   \mathop{\overline{\nu}_{#1\mid#2}}(#3)
  }
}
\newcommand{\Ard}[2]{
\ifthenelse{\equal{#2}{no}}
  {% true case
   \mathop{\overline{\ord}_{#1}}
  }
  {% false case
   \mathop{\overline{\ord}_{#1}}(#2)
  }
}
\newcommand{\resord}[3]{
\ifthenelse{\equal{#3}{no}}
  {% true case
   \ord_{#1}^{|#2}
  }
  {% false case
   \ord_{#1}^{|#2}(#3)
  }
}
\newcommand{\aresord}[3]{
\ifthenelse{\equal{#3}{no}}
  {% true case
   \mathop{\overline{\ord}_{#1}^{|#2}}
  }
  {% false case
   \mathop{\overline{\ord}_{#1}^{|#2}}(#3)
  }
}
\newcommand{\Mult}[3]{
\ifthenelse{\equal{#3}{no}}
  {% true case
   \overline{\operatorname{mult}}_{#1}#2
  }
  {% false case
   \ifthenelse{\equal{#2}{no}}
  {% true case
   \overline{\operatorname{mult}}_{#1}\bullet|_{#3}
  }
  {% false case
   \overline{\operatorname{mult}}_{#1}#2|_{#3}
  }
  }
}

\newcommand{\factor}[2]{\left. \raise 2pt\hbox{\ensuremath{#1}} \right/
        \hskip -2pt\raise -2pt\hbox{\ensuremath{#2}}}

\newcommand{\acts}{\medspace\rotatebox[origin=c]{-90}{\raisebox{1.5pt}{\scalebox{0.75}{$\circlearrowright$}}}\medspace}
\newcommand{\catname}[1]{{\normalfont\textbf{#1}}}
\newcommand{\Sch}{\catname{Sch}}
\newcommand{\Set}{\catname{Set}}
\newcommand{\GSch}{\bT\catname{-Sch}}
\newcommand*{\qedclaim}{
  \null\nobreak\hfill\ensuremath{\blacksquare}
}

\makeatletter
\newcommand{\specialcell}[1]{\ifmeasuring@#1\else\omit$\displaystyle{#1}$\ignorespaces\fi}
\makeatother

\counterwithin*{equation}{section}
\counterwithin*{equation}{subsection}

\AtBeginDocument{
   \def\MR#1{}
}

\setlist[itemize,enumerate]{
  leftmargin=0.9cm,
  topsep=2pt,        % space before/after list
  itemsep=1pt,       % space between items
  parsep=0pt,        % space between paragraphs within an item
  partopsep=0pt      % extra space when list starts a paragraph
}
\renewcommand{\theenumi}{\arabic{enumi}}
\mdfsetup{
 skipabove=6pt,
 skipbelow=3pt,
 roundcorner=10pt
}

\makeatletter
\newcommand{\fillqed}[2][0pt]{% #1 = optional extra shift (e.g., -2pt), #2 = equation
  \settowidth{\@tempdima}{$\displaystyle #2$}%          width of equation
  \settowidth{\@tempdimb}{\qedsymbol}%                  width of QED symbol
  \@tempdimc = \linewidth
  \advance\@tempdimc by -\@tempdima
  \advance\@tempdimc by -\@tempdimb
  \divide\@tempdimc by 2                                 % half-skip
  \hskip\@tempdimc\relax #2\hskip\@tempdimc\relax
  \kern#1\relax \qedhere                                 % apply correction
}
\makeatother
\author[L. Rösler]{Linus Rösler}
  \address{\'Ecole Polytechnique F\'ed\'erale de Lausanne, Chair of Algebraic Geometry \newline 
    \indent MA C3 615 (Bâtiment MA), Station 8, CH-1015 Lausanne}
  \email{linus.rosler@epfl.ch}

  \title[On the birational isotriviality of the Albanese morphism]{On the birational isotriviality of the Albanese morphism of a log Calabi--Yau pair with a torus action}
  \subjclass[2020]{Primary: 14E99, 14J32, 14J40, 14K99, 14L30, Secondary: 14D06, 14E30} 
  \keywords{Albanese morphism, torus actions, log canonical singularities, log Calabi–Yau pairs, Beauville–Bogomolov decomposition.}

    \AtBeginDocument{%
   \def\MR#1{}
    }

\begin{document}

\begin{abstract}
    Let $(X,\Delta)$ be a projective, log canonical, $K$--trivial pair over the complex numbers. Let $Z$ be a minimal log canonical center of $(X,\Delta)$ and suppose that there exists a torus $\bT\subseteq\Aut(X)$ preserving $\Delta$ and such that $\dim\bT=\codim_X Z$. Then we show that two general fibers of the Albanese morphism $\alb_X$ are birationally equivalent. In particular, the pathological example of a projective, log canonical, $K$--trivial variety whose Albanese morphism is not generically birationally isotrivial, recently constructed by Bernasconi, Filipazzi, Patakfalvi and Tsakanikas, can be avoided under the additional hypothesis that there exists a torus of large enough dimension in the automorphism group of the given pair.
\end{abstract} 

\maketitle

%======================================================================================================
%======================================================================================================

\tableofcontents

\section{Introduction}\label{sec:Albanese_intro}
We work over the field of complex numbers $\bC$.

In the birational classification of projective varieties, normal projective varieties with mild singularities and torsion canonical sheaf (in short, $K$--trivial) form an important class. Indeed, the Minimal Model Program predicts that they are one of the three fundamental building blocks into which every projective variety can be decomposed through iterated fibrations, up to birational equivalence. In a next step, among $K$--trivial varieties, smooth projective varieties which admit an algebraic group structure form a nice and well--understood subclass: abelian varieties. One reason for their general importance is that every smooth projective variety admits a universal morphism to an abelian variety, called the Albanese morphism. This provides a powerful tool in birational geometry, as thereby every smooth projective variety can be examined through the lens of an abelian variety. For a smooth projective $K$--trivial variety $X$, this point of view is of special importance, as its Albanese morphism exhibits especially nice properties: it is a fibration and all of its closed fibers are isomorphic to one another (\cite{Calabi_On_Kahler_manifolds_with_vanishing_canonical_class}). Hence, it provides a decomposition of $X$ into an abelian variety and the fiber. So in the case where the Albanese variety of $X$ is non--trivial, we reduce the study of the geometry of $X$ to lower dimension. Following \cite{Bernasconi_Filipazzi_Patakfalvi_Tsakanikas_A_counterexample_to_the_log_canonical_Beauville-Bogomolov_decomposition}, we call this the weak Beauville--Bogomolov decomposition of $X$.

It is an active area of research to generalize the structure theory of $K$--trivial varieties, such as the weak Beauville--Bogomolov decomposition, in several directions: allow boundaries, mild singularities, and negativity in the canonical sheaf. Through the combined work in \cite{Greb_Kebekus_Petternel_Singular_Spaces_with_Trivial_Canonical_Class, Druel_A_decomposition_theorem_for_singular_spaces_with_trivial_canonical_class___of_dimension_at_most_five, Greb_Guenancia_Kebekus_Klt_varieties_with_trivial_canonical_class_holonomy_differential_forms_and_fundamental_groups, Horing_Peternell_Algebraic_integrability_of_foliations_with_numerically_trivial_canonical___bundle, Campana_Cao_Matsumura_Projective_klt_pairs_with_nef_anti-canonical_divisor, Patakfalvi_Zdanowicz_On_the_Beauville-Bogomolov_decomposition_in_characteristic_p, Matsumura_Wang_Structure_theorem_for_projective_klt_pairs_with_nef_anti-canonical_divisor}, the weak Beauville--Bogomolov decomposition has been established for projective Kawamata log terminal pairs with nef anti--canonical divisor (which includes $K$--trivial ones), see \cite[Theorem A]{Bernasconi_Filipazzi_Patakfalvi_Tsakanikas_A_counterexample_to_the_log_canonical_Beauville-Bogomolov_decomposition}. On the other hand, in \cite[Theorem 1.3]{Bernasconi_Filipazzi_Patakfalvi_Tsakanikas_A_counterexample_to_the_log_canonical_Beauville-Bogomolov_decomposition}, the authors show that it may strongly fail for $K$--trivial varieties with log canonical singularities. Indeed, they construct a projective variety $X$ with strictly log canonical singularities and trivial canonical bundle such that $\Alb_X$ is an elliptic curve and every fiber of $\alb_X$ is birational to at most finitely many other fibers (see also the recent work \cite[Theorem B]{Collins_Guenancia_Log_Calabi-Yau_manifolds_holomorphic_tensors_stability_and_universal_cover}). On the positive side, by \cite[Theorem 1.5]{Bernasconi_Filipazzi_Patakfalvi_Tsakanikas_A_counterexample_to_the_log_canonical_Beauville-Bogomolov_decomposition}, the Albanese morphism of a projective, log canonical pair with nef anti--canonical divisor is still a flat fibration with geometrically reduced fibers. We would also like to point out the recent work in \cite{Zhu_On_quasi-Albanese_morphisms_for_log_canonical_Calabi-Yau_pairs} about the quasi--Albanese morphism of a log Calabi--Yau pair.

The main purpose of this article is to obtain a positive result in the direction of the weak Beauville--Bogomolov decomposition for a projective, log canonical, $K$--trivial pair $(X,\Delta)$ (in short, a log Calabi--Yau pair) by imposing some additional hypotheses. We take an approach which is a common strategy for simplifying hard problems in algebraic geometry: we additionally assume that we have a non-trivial torus action $\bT\acts (X,\Delta)$. This gives us new angles to tackle the problem, as it allows the use of, e.g., combinatorial techniques. 

Given a log Calabi--Yau pair, suppose that there is a torus $\bT\subseteq\Aut(X)$ that preserves the boundary $\Delta$. The larger the dimension of $\bT$, the simpler the structure of $(X,\Delta)$ is; e.g., if $\dim\bT=\dim X$, then $X$ is toric (see \autoref{lem:complexity}). There is an elementary upper bound, for the dimension of such a torus, depending on the geometry of $(X,\Delta)$: if $Z$ is a minimal log canonical center of $(X,\Delta)$, then by \autoref{cor:dim_bound_torus_center} we have $\dim \bT\leq \codim_X Z$. From this point of view, the first question one should address is whether some form of the weak Beauville--Bogomolov decomposition holds under the hypothesis that there exists a torus $\bT$ as above achieving the maximal dimension, i.e., $\dim\bT=\codim_X Z$. Our main result is the following affirmative answer.

\begin{thmletter}\label{thmA}
    Let $(X,\Delta)$ be a log Calabi--Yau pair and let $Z$ be a minimal log canonical center of $(X,\Delta)$. Assume that there exists a torus $\bT\subseteq\Aut(X)$ of dimension $\codim_X Z$ that preserves $\Delta$. Then the Albanese morphism $\alb_X\colon X\to\Alb_X$ is generically birationally isotrivial.
\end{thmletter}

\begin{remintro}\label{rem:birational_isotriviality}
    The term `generically birationally isotrivial' means that two general closed fibers are birationally equivalent. For a flat fibration, by \cite[Theorem 1.1, Corollary 1.3]{Bogomolov_Bohning_Graf_von_Bothmer_Birationally_isotrivial_fiber_spaces} this is equivalent to requiring that, up to taking a finite cover of a dense open subset (by shrinking it we may assume that we have an étale open) of the base, the family is birational to the trivial family. Since for a log Calabi--Yau pair $(X,\Delta)$ the Albanese morphism $\alb_X$ is flat by \cite[Theorem 1.5]{Bernasconi_Filipazzi_Patakfalvi_Tsakanikas_A_counterexample_to_the_log_canonical_Beauville-Bogomolov_decomposition}, this provides an alternative formulation of \autoref{thmA}: there exists an étale open $U$ in $\Alb_X$ such that $X\times_{\Alb_X}U\simeq X_0\times U$, where $X_0$ is a birational model of the general fiber.
\end{remintro}

Therefore, in the situation where $(X,\Delta)$ has the simplest geometry from a toric perspective, we obtain a birational version of the weak Beauville--Bogomolov decomposition. In particular, pathological examples such as \cite[Theorem 1.3]{Bernasconi_Filipazzi_Patakfalvi_Tsakanikas_A_counterexample_to_the_log_canonical_Beauville-Bogomolov_decomposition} cannot occur.

Let us comment on the sharpness of the conclusion of \autoref{thmA}, or in other words, whether one can hope for more than generic birational isotriviality. It is fairly straightforward to construct an example which is not birationally isotrivial, see \autoref{ex:gen_isotr_not_isotr}. However, this example is still generically isotrivial, so one might ask whether this is always the case. The answer is no: in \autoref{ex:bir_isotr_not_gen_isotr}, with a similar but slightly more involved construction, we construct a pair that satisfies the hypotheses of \autoref{thmA}, but whose Albanese morphism is neither generically isotrivial, nor birationally isotrivial. So in this sense, \autoref{thmA} is sharp.

However, in the construction for \autoref{ex:bir_isotr_not_gen_isotr}, we broke the birational isotriviality in a somewhat trivial way, namely by creating a reducible fiber through a simple blow--up. This raises the following question.% (see \autoref{rem:strategy_birationally_isotrivial} for a possible strategy in the dlt case).

\begin{questionintro}\label{quest:bir_isotr}
    Let $(X,\Delta)$ be a pair satisfying the hypotheses of \autoref{thmA}. Is it always the case that for any two closed fibers $F,F'$ of $\alb_X$, there exists a component of $F$ which is birational to a component of $F'$?
\end{questionintro}

Furthermore, as the boundary $\Delta$ is missing from the conclusion in \autoref{thmA}, one might ask whether it can be incorporated in the birational isotriviality.

\begin{questionintro}\label{quest:crep_bir_isotr}
    Let $(X,\Delta)$ be a pair satisfying the hypotheses of \autoref{thmA}. Is $\alb_X\colon (X,\Delta)\to \Alb_X$ generically crepant birationally isotrivial? That is, for two general fibers $F,F'$ of $\alb_X$, are $(F,\Delta|_F)$ and $(F',\Delta|_{F'})$ crepant birational?
\end{questionintro}

Note that the Albanese morphism of the pair in \autoref{ex:bir_isotr_not_gen_isotr} is generically crepant birationally isotrivial. However, with our current method of proof, it is unclear how to incorporate the boundary in the birationality of the fibers, essentially because the Bia\l ynicki--Birula decomposition (see below resp.\ \autoref{subsec:Birula}) does not see the boundary.

\subsection{Strategy of proof}\label{subsec:Albanese_strategy}

Let $(X,\Delta)$ be a log Calabi--Yau pair with a minimal lc center $Z$ and a torus $\bT\subseteq\Aut(X)$ with $\dim\bT=\codim_X Z$. For simplicity, assume that $(X,\Delta)$ is dlt. By \cite[Theorem 4.19]{Kollar_Singularities_of_the_minimal_model_program}, we can perform adjunction for $Z$, namely there is a choice of boundary $\Delta_Z$ on $Z$ such that $(Z,\Delta_Z)$ is $K$--trivial and by minimality of $Z$ it is klt. In particular, since the weak Beauville--Bogomolov decomposition holds for $K$--trivial klt pairs (\cite{Patakfalvi_Zdanowicz_On_the_Beauville-Bogomolov_decomposition_in_characteristic_p,Matsumura_Wang_Structure_theorem_for_projective_klt_pairs_with_nef_anti-canonical_divisor}), we know that $\alb_Z$ is isotrivial and that $Z$ dominates $\Alb_X$, which is useful to compare the fibers of $\alb_Z$ with the fibers of $\alb_X$. The strategy is now to use the torus action to reduce the birational isotriviality of $\alb_X$ to the isotriviality of $\alb_Z$.

By \autoref{cor:dim_bound_torus_center} and \autoref{lem:dim_T_action}, $Z$ is a top--dimensional component of the fixed locus $X^{\bT}$. For torus actions, there is a powerful tool to relate $X$ and $X^{\bT}$: the Bia\l ynicki--Birula decomposition, originating in \cite{Bialynicki_Some_theorems_on_actions_of_algebraic_groups}. For a smooth projective variety $X$ with a $\bG_m$--action, there is a partition $X=X_1\sqcup\cdots\sqcup X_r$ of $X$ into locally closed subsets $X_i\subseteq X$ such that on each cell $X_i$, the $\bG_m$--action extends to a multiplicative action by $\bA^1=\bG_m\cup\{0\}$. In particular, we have in each cell a set of limit points $F_i=0\cdot X_i\subseteq X_i$, and a map $X_i\to F_i$ sending the points $x$ of $X_i$ to their limit $0\cdot x\in F_i$. As $0$ is a fixed point of the action $\bG_m\acts \bA^1$, we have $F_i\subseteq X^{\bG_m}$; in fact the $F_i$ are precisely the connected components of the fixed locus. Strikingly, the limit maps $X_i\to F_i$ are actually affine fiber bundles, see \cite[Theorem 4.1]{Bialynicki_Some_theorems_on_actions_of_algebraic_groups}.

In \cite{Jelisiejew_Sienkiewicz_Bialynicki-Birula_decomposition_for_reductive_groups}, a version of the Bia\l ynicki--Birula decomposition was established for actions of linearily reductive groups (so in particular higher--dimensional tori), for much more general schemes (locally of finite type over $\bC$ suffices), and with nice functorial properties. More precisely, for a torus action $\bT\acts X$, one considers a partial compactification $\bT\subseteq\overline{\bT}$ where $\overline{\bT}$ is a linearily reductive monoid (i.e. $\overline{\bT}$ is an affine variety with an associative multiplication $\overline{\bT}\times\overline{\bT}\to\overline{\bT}$ with unit, such that $\bT$ is the dense submonoid of invertible elements). This monoid $\overline{\bT}$ will play the role of the partial compactification $\bG_m\subseteq\bA^1$ in the original Bia\l ynicki--Birula decomposition. For every choice of $\overline{\bT}$, one obtains a Bia\l ynicki--Birula decomposition $i_X\colon X^+\to X$ such that on $X^+$ the torus action extends to a $\overline{\bT}$--action, together with a limit map $\pi_X\colon X^+\to X^{\bT}$. In the paragraph above about the original Bia\l yinicki--Birula decomposition, $X^+$ would be the disjoint union $X^+=X_1\sqcup\cdots\sqcup X_r$, and following the terminology in \cite{Jelisiejew_Sienkiewicz_Bialynicki-Birula_decomposition_for_reductive_groups}, call the components of $X^+$ \emph{cells}. Although this is obtained in great generality, it is only in the smooth case where $X^+$ is known to have similar properties as in the classical Bia\l ynicki--Birula decomposition, e.g., that $\pi_X$ is an affine fiber bundle.

Now note that because of \autoref{cor:dim_bound_torus_center}, $Z$ is a component of $X^{\bT}$. If we can make sure that the cell over $Z$ in the Bia\l ynicki--Birula decomposition is open in $X$, then we obtain an open subset of $X$ which is an affine open fiber bundle over an open subset of $Z$. The result \cite[Proposition 1.6]{Jelisiejew_Sienkiewicz_Bialynicki-Birula_decomposition_for_reductive_groups} provides a useful criterion: if for $z\in Z$ the action of $\bT$ on the cotangent space $T_z^{\vee}(X)$ extends to a $\overline{\bT}$--action, then $z$ lies in an open cell. Using the weight decomposition of $\bT\acts T_z^{\vee}(X)$, this reduces to linear algebra. In \autoref{lem:extend_lin_t_action}, we use the hypothesis on the dimension of $\bT$ to construct a partial compactification $\overline{\bT}$ for which this holds.

Then, in order to use the open cell to relate the Albanese morphisms of $X$ and $Z$, we use an elementary version of the Albanese morphism of open varieties, as constructed by Serre in \cite{Serre_Morphismes_universels_et_variete_dAlbanese}, see \autoref{subsec:Albanese} (note also the much more refined work on this in \cite{Fujino_On_quasi-Albanese_maps,Schroer_Albanese_maps_for_open_algebraic_spaces}). Essentially, the Albanese variety of an open variety is given by the Albanese variety of a big enough compactification, see \autoref{lem:alb_of_big_comp}. As we are working with open varieties, we will need non--proper versions of the Rigidity Lemma (\autoref{lem:rigidity_magic}) and the fact that a rationally chain connected variety has trivial Albanese (\autoref{lem:alb_iso_rcc}). Putting all of this together will yield the result in the dlt case (\autoref{prop:dlt_case_arbitrary_A}).

Finally, to pass from the dlt case to the lc case, we use a $\bT$--equivariant dlt modification (\autoref{lem:dlt_mod}). Its existence is based on functorial resolution of singularities (\cite[Theorem 3.45]{Kollar_Lectures_on_resolution_of_singularities}) and the fact that for a connected algebraic group $G$, any MMP on a pair $(X,\Delta)$ with a $G$--action is a $G$--MMP (\cite[Remark 2.3]{Floris_A_note_on_the_G-Sarkisov_program}).

\subsection{Notation}\label{subsec:Albanese_notation}
\begin{itemize}
    \item A \emph{variety} $X$ is an integral separated scheme of finite type over $\mathbb{C}$.

    \item For a normal variety $X$, we denote by $\omega_X$ its canonical sheaf. A \emph{canonical divisor} $K_X$ is a Weil divisor on $X$ such that $\mathcal{O}_X(K_X) = \omega_X$.
    
    \item A \emph{torus} is an algebraic group $\bT$ isomorphic to a non-negative power of the multiplicative group $\bG_m\coloneqq\Spec\bC[x,x^{-1}]$.
    
    \item A \emph{compactification} of a variety $U$ is an open immersion $i\colon U\to X$ where $X$ is projective. Another compactification $j\colon U\to Y$ is said to \emph{dominate} $i$ if there exists $f\colon Y\to X$ such that $f\circ j = i$.

    \item We say $(X,\Delta)$ is a \emph{pair} if $X$ is a normal variety and $\Delta$ is an effective $\mathbb{Q}$-divisor such that $K_X + \Delta$ is $\mathbb{Q}$--Cartier.
    
    \item We say that two pairs $(X_1,\Delta_1)$ and $(X_2,\Delta_2)$ are \emph{crepant birational to one another} if there exist proper birational morphisms $p_1\colon Y\to X_1$ and $p_2\colon Y\to X_2$ from a normal variety $Y$ such that $p_1^*(K_{X_1} + \Delta_1) = p_2^*(K_{X_2} + \Delta_2)$.

    \item Let $f \colon X \to Y$ be a flat projective morphism of varieties. We say that $f$ is \emph{isotrivial} if it is locally trivial (i.e., isomorphic to the product family) in the \'etale topology. We say that it is generically isotrivial if it is isotrivial over a Zariski open subset. Note that if $f$ is (generically) isotrivial then two (general) closed fibers of $f$ are isomorphic.
    
    \item Let $f\colon X\to Y$ be a proper morphism between varieties. We say that $f$ is a \emph{fibration} if $f_*\mathcal{O}_X = \mathcal{O}_Y$ holds.
    
    \item Let $f\colon X\to Y$ be a fibration of quasi-projective varieties. We say that $f$ is \emph{birationally isotrivial} if, for any two closed points $y_1,y_2\in Y$, the varieties $X_{y_1}$ and $X_{y_2}$ are birational to each other. Furthermore, we say that $f$ is \emph{generically birationally isotrivial} if it is birationally isotrivial over a non-empty open subset of $Y$.
    
    \item A variety $X$ is said to be \emph{rationally chain connected} if any two general points on $X$ can be joined by a chain of rational curves.
\end{itemize}

\subsection{Acknowledgements}\label{subsec:acknowledgements}

The author would like to thank his advisor Zsolt Patakfalvi for numerous fruitful discussions and continuous guidance throughout this project. He would also like to thank Jefferson Baudin, Stefano Filipazzi, Nikolaos Tsakanikas and Domenico Valloni for useful discussions and/or comments on the content of this article. The author was partly supported by the  project grant \#200020B/192035 from the Swiss National Science
Foundation (FNS), as well as by the ERC starting grant \#804334.

\section{Preliminaries}\label{sec:Albanese_preliminaries}

\subsection{Torus actions}\label{subsec:torus_actions}
In this section we gather some elementary properties of torus actions on algebraic varieties, most importantly on their fixed point loci. In \autoref{subsubsec:MMP_torus} we will discuss torus actions on pairs. We refer the reader to \cite[Chapter III,\S 8]{Borel_Linear_algebraic_groups} for an introduction to tori. For the convenience of the reader, we included detailed proofs for most of these elementary properties. Let us start with some definitions and terminology.

\begin{definition}\label{def:torus_action}
    An action of a torus $\bT$ on a separated scheme $X$ of finite type over $\bC$ is a morphism $\alpha\colon\bT\times_{\bC} X\to X$ satisfying the usual commutative diagrams. We write this as $\bT\acts X$. The kernel $\ker(\bT\acts X)$ is defined as the algebraic subgroup of elements of $\bT$ that act trivially on $X$ (which is a subgroup scheme by \cite[II.1.3.6]{Demazure_Gabriel_Groupes_algebriques}). If $\ker(\bT\acts X)$ is trivial, the action $\bT\acts X$ is said to be \emph{effective}.
\end{definition}

\begin{remark}\label{rem:effective_action}
    We can always reduce to the case of an effective torus action upon replacing $\bT$ with $\bT/\ker(\bT\acts X)$. This is a torus by \cite[Proposition 8.7 and Corollary on page 114, Section III.8.5]{Borel_Linear_algebraic_groups}.
\end{remark}

Now we define the complexity of the torus action $\bT\acts X$. For an effective action, it is just defined as the number $\dim X-\dim\bT$, but it gives us some more flexibility if for some reason we do not want to pass to an effective action using \autoref{rem:effective_action}.

\begin{definition}\label{def:complexity}
    Let $\bT\acts X$ be a torus action on a variety $X$. The \emph{complexity} $c(\bT\acts X)$ is defined by the formula
    \begin{align*}
        c(\bT\acts X)\coloneqq\dim X-\dim\bT+\dim\ker(\bT\acts X).
    \end{align*}
    In particular, if $\bT\acts X$ is effective (or, more generally, has finite kernel), we have $c(\bT\acts X)=\dim X-\dim\bT$. 
\end{definition}

Geometrically, the complexity is the codimension of a general orbit, and if it is $0$, then $X$ is toric. Although the above definition is sufficient for us, we include a proof for the sake of completeness.

\begin{lemma}\label{lem:complexity}
    For a torus action $\bT\acts X$ on a variety $X$, we have
    \begin{align*}
        c(\bT\acts X)=\codim\overline{\bT\cdot x}
    \end{align*}
    for a general closed point $x\in X$. In particular, if $c(\bT\acts X)=0$, then $X$ is toric.
\end{lemma}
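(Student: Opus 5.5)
First I reduce to the effective case. Let $K=\ker(\bT\acts X)$. By \autoref{rem:effective_action}, the quotient $\bT'=\bT/K$ is a torus, and it acts effectively on $X$ with the same orbits. Its dimension is $\dim\bT'=\dim\bT-\dim K$. Hence $c(\bT\acts X)=c(\bT'\acts X)=\dim X-\dim\bT'$, and I may assume from now on that the action is effective.

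For a closed point $x$, the orbit $\bT\cdot x$ is the image of the orbit map $\bT\to X$, $t\mapsto t\cdot x$. It is isomorphic to $\bT/\bT_x$, where $\bT_x$ is the stabilizer. Since orbits are locally closed, $\dim\overline{\bT\cdot x}=\dim\bT-\dim\bT_x$. The claim is therefore that $\bT_x$ is finite for general $x$.

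\textbf{Main step: a general point has finite stabilizer.} Since $X$ is a variety, it is covered by $\bT$-stable affine opens by Sumihiro's theorem. This requires normality, so I would instead argue as follows. Consider the stabilizer group scheme $S=\{(t,x): t\cdot x=x\}\subseteq\bT\times X$ over $X$. By upper semicontinuity of fiber dimension, the set of $x$ where $\dim\bT_x$ is minimal, say equal to $d$, is open and dense.

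Suppose $d>0$. The diagonalizable subgroups of $\bT$ are determined by discrete data: they are the subgroups cut out by subgroups of the character lattice. So there are only countably many of them, and I want to show that a single one of positive dimension stabilizes a dense set. My first attempt is the following. The identity components $(\bT_x)^0$ are subtori of dimension $d$. Subtori correspond to saturated sublattices of the cocharacter lattice, of which there are only countably many. Hence, over the uncountable field $\bC$, some fixed subtorus $\bT_1$ satisfies $\bT_1\subseteq\bT_x$ for $x$ in a set that is not contained in any countable union of proper closed subsets. Since the fixed locus $X^{\bT_1}$ is closed, this forces $X^{\bT_1}=X$. Then $\bT_1\subseteq K$, contradicting effectiveness.

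There is an alternative I could use. Locally on a $\bT$-stable affine open, when one exists, the $\bT$-action corresponds to a grading of the coordinate ring $\bC[U]$ by the character lattice $M$. Effectiveness means the weights that occur generate a finite-index sublattice. At a general point $x$, all the finitely many homogeneous generators are nonzero, so the stabilizer is cut out by those weights and is therefore finite. I expect this countability/semicontinuity step to be the main obstacle. The countability argument avoids needing invariant affine covers, so it is the version I would use.

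\textbf{Toric consequence.} If $c=0$, then for general $x$ the orbit closure $\overline{\bT\cdot x}$ has codimension $0$, and so equals $X$ because $X$ is irreducible. Thus $\bT\cdot x\cong\bT/\bT_x$ is a dense open subset of $X$, and $\bT/\bT_x$ is again a torus because $\bT_x$ is finite. Its action on $X$ extends the multiplication on itself. Hence $X$ is toric, in the sense of containing a torus as a dense open orbit whose action extends to $X$.
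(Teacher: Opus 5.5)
Your proof is correct, but the key step is argued differently from the paper.

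\emph{The paper's route.} It cites Martin's generic-stabiliser theorem to obtain a single subgroup $\bK\subseteq\bT$ that is \emph{exactly} the stabiliser of a general point. Since $\bK$ then fixes a dense set of points, it acts trivially, so $\bK=\ker(\bT\acts X)$, and orbit--stabiliser finishes.

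\emph{Your route.} You first reduce to the effective case. You then prove only what is needed: the minimal stabiliser dimension $d$, attained on a dense open $W$ by upper semicontinuity (applied along the identity section of the stabiliser group scheme), is $0$. The sets $W\cap X^{\bT_1}$, for $\bT_1$ running over the countably many $d$-dimensional subtori, are closed in $W$ and cover it, since $(\bT_x)^0$ is such a subtorus. By uncountability of $\bC$, one of them is all of $W$, which forces $\bT_1\subseteq\ker(\bT\acts X)$.

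\emph{Comparison.} Your argument is self-contained and elementary, using only semicontinuity and the countability of saturated sublattices. It still yields the conclusion on a Zariski-open set, not just at very general points. The costs are that it uses the uncountability of the ground field, and it determines only the dimension of the generic stabiliser rather than the stabiliser itself. The paper's citation is shorter and independent of the field.

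\emph{One small point in your toric paragraph.} For $\bT/\bT_x$ to act on $X$, you need $\bT_x$ to act trivially on $X$. This holds because $\bT$ is commutative: $\bT_{t\cdot x}=\bT_x$, so $\bT_x$ fixes the dense orbit pointwise, hence fixes all of $X$. In the effective case this even gives $\bT_x=1$, so the open orbit is $\bT$ itself.

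Your appeal to local closedness of orbits plays the role of the paper's Chevalley-plus-translation argument and amounts to the same thing.
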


\begin{proof}
    By \cite[Corollary 1.6]{Martin_Generic_stabilisers_for_actions_of_reductive_groups}, there exists an algebraic subgroup $\bK\subseteq\bT$ such that a general point of $x$ has stabilizer $\bK$ (because every subgroup of $\bT$ is reductive and $\bT$ is commutative). In particular, $\bK$ fixes a general point of $X$, so $\bK\acts X$ is trivial, from which we conclude that $\bK=\ker(\bT\acts X)$. By the Orbit-stabilizer theorem, we are done. Finally, note that if $c(\bT\acts X)=0$ and $x$ is general, then $\bT\cdot x$ is dense. As it is constructible by Chevalley's theorem, it contains a non-empty open subset. Through the action of $\bT$, we see that $\bT\cdot x$ is open itself.
\end{proof}

Given a torus action $\alpha\colon\bT\times_{\bC} X\to X$, a closed subscheme $Y$ of $X$ is said to be $\bT$--stable if $\alpha|_{\bT\times_{\bC}Y}$ factors through $Y$, so that we obtain an induced action $\bT\acts Y$. The action $\bT\acts X$ is said to be trivial if it is the projection to $X$, and a $\bT$--stable subscheme $Y$ is said to be $\bT$--invariant if the induced action is trivial. An important notion for this article is the fixed point scheme of an action, which was constructed by Fogarty in \cite{Fogarty_Fixed_point_schemes}.

\begin{definition}[\protect{\cite[Theorem 2.3]{Fogarty_Fixed_point_schemes}}]
    For a torus action $\bT\acts X$, the fixed point locus $X^{\bT}$ is defined to be the maximal $\bT$--invariant closed subscheme of $X$.
\end{definition}

Note that by \cite[Theorem 5.2]{Fogarty_Fixed_point_schemes}, if $X$ is smooth, then $X^{\bT}$ is as well. On the other hand, $X^{\bT}$ might be badly behaved in general; see, e.g., \cite[6.3]{Fogarty_Fixed_point_schemes} for an example where $X$ is factorial and $X^{\bT}$ is non--reduced.

Now let us collect some useful facts about torus actions. The following lemma shows that as tori are connected, a torus action cannot permute the irreducible components non--trivially.

\begin{lemma}\label{lem:action_irred_comp}
    Let $X$ be a separated scheme of finite type over $\bC$ with a torus action $\bT\acts X$. Then every irreducible component of $X^{\red}$ is $\bT$--stable.
\end{lemma}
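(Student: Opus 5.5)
The plan is to use the connectedness of $\bT$ to show that the action map carries each irreducible component into itself. First I reduce to $X$ reduced. The action $\alpha\colon\bT\times_{\bC}X\to X$ restricts to $\bT\times_{\bC}X^{\red}$. Since $\bT$ is smooth over $\bC$, the product $\bT\times_{\bC}X^{\red}$ is reduced, so this restriction factors through $X^{\red}$. This gives an action $\bT\acts X^{\red}$, so I may assume $X$ is reduced.

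Now let $X_1,\dots,X_n$ be the irreducible components of $X$ and fix one of them, say $X_i$. Since $\bT$ is irreducible and $\bC$ is algebraically closed, the product $\bT\times_{\bC}X_i$ is irreducible. Its image $\alpha(\bT\times X_i)$ is therefore irreducible, and its closure $W$ is an irreducible closed subset of $X$. The image contains $\alpha(\{e\}\times X_i)=X_i$, so $X_i\subseteq W$. By maximality of irreducible components, $W=X_i$. Hence $\alpha(\bT\times X_i)\subseteq X_i$ set-theoretically.

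To upgrade this to a scheme-theoretic factorization, I give $X_i$ its reduced induced structure. The source $\bT\times_{\bC}X_i$ is reduced, again because $\bT$ is smooth and $X_i$ is reduced. A morphism from a reduced scheme whose image lies set-theoretically in a closed subset factors through the reduced induced closed subscheme on that subset. Therefore $\alpha|_{\bT\times X_i}$ factors through $X_i$, which means $X_i$ is $\bT$--stable. The axioms of an action hold on $X_i$ automatically, since $X_i\to X$ is a monomorphism.

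There is no serious obstacle here. The only points needing care are the reducedness of the products $\bT\times X^{\red}$ and $\bT\times X_i$, which holds because $\bT$ is smooth over a perfect field, and the irreducibility of $\bT\times X_i$, which holds because the base field is algebraically closed.
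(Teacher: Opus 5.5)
Your proposal is correct and follows essentially the same route as the paper: first pass to $X^{\red}$ using that $\bT\times X^{\red}$ is reduced, then use that $\bT\times X'$ is irreducible and that its image contains $X'$ to conclude that the image is $X'$. The only cosmetic difference is that you take the closure of the set-theoretic image with its reduced induced structure, whereas the paper uses the scheme-theoretic image; since the source is reduced, these amount to the same thing.
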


\begin{proof}
    As $\bT$ is reduced, the product $\bT\times X^{\red}$ is reduced as well, and thus the composition
    \[
        \bT \times X^{\red} \to \bT \times X \to X
    \]
    factors through $X^{\red}$. In other words, $X^{\red}$ is $\bT$--stable. Denote by $\alpha\colon \bT\times X^{\red}\to X^{\red}$ the action, and let $X'$ be an irreducible component of $X^{\red}$. As $\bT$ is irreducible by \stacksproj{0B7Q}, we obtain that $\bT\times X'$ is irreducible. Hence the scheme theoretic image of $\alpha|_{\bT\times X'}$ is irreducible and contains $X'$, and so is in fact equal to $X'$. In particular, $\alpha|_{\bT\times X'}$ factors through $X'$, i.e. $X'$ is $\bT$--stable.
\end{proof}

The following lemma is to prepare \autoref{lem:dim_T_action}. As the dimension is topological in nature, we want to be able to pass to the reduction, to avoid dealing with non--reducedness.

\begin{lemma}\label{lem:fixed_red}
    Let $X$ be a separated scheme of finite type over $\bC$ with a torus action $\bT\acts X$. Then the reduction $X^{\red}\to X$ induces a canonical isomorphism
    \begin{align*}
        ((X^{\red})^{\bT})^{\red}=(X^{\bT})^{\red}.
    \end{align*}
\end{lemma}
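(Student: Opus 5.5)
We will compare both sides as closed subschemes of $X$ having the same underlying topological space, using the universal property of the fixed point scheme in the form of \cite[Theorem 2.3]{Fogarty_Fixed_point_schemes}: a closed subscheme $Y\subseteq X$ that is $\bT$--stable with trivial induced action is contained in $X^{\bT}$. Write $\iota\colon X^{\red}\to X$ for the reduction, which is a $\bT$--equivariant closed immersion by the first paragraph of the proof of \autoref{lem:action_irred_comp}.

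\emph{Step 1: $((X^{\red})^{\bT})^{\red}\subseteq (X^{\bT})^{\red}$.} The scheme $(X^{\red})^{\bT}$ is a closed subscheme of $X^{\red}$, hence of $X$, and it is $\bT$--invariant for the action restricted from $X^{\red}$. Since $\iota$ is equivariant, it is also $\bT$--invariant for the action of $X$, so by maximality it is contained in $X^{\bT}$. Passing to reductions, $((X^{\red})^{\bT})^{\red}$ is a reduced closed subscheme of $X^{\bT}$, hence of $(X^{\bT})^{\red}$.

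\emph{Step 2: $(X^{\bT})^{\red}\subseteq ((X^{\red})^{\bT})^{\red}$.} Set $W=(X^{\bT})^{\red}$. This is reduced and closed in $X$, hence a closed subscheme of $X^{\red}$. We must show it is $\bT$--invariant in $X^{\red}$. The action map restricted to $\bT\times W$ and the projection $\bT\times W\to W\to X$ agree as maps to $X$, since they factor through $\bT\times X^{\bT}$, where the action is trivial. Because $\bT\times W$ is reduced ($\bT$ is smooth over $\bC$), both maps factor through $X^{\red}$. Moreover, $X^{\red}\to X$ is a monomorphism, so the two maps agree as maps to $X^{\red}$. Thus $W$ is $\bT$--stable in $X^{\red}$ with trivial induced action, which gives $W\subseteq (X^{\red})^{\bT}$. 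Since $W$ is reduced, it follows that $W\subseteq ((X^{\red})^{\bT})^{\red}$.

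These two inclusions of closed subschemes of $X$ give equality, and the identification is canonical because it is induced by $\iota$.

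The only delicate point is Step 2, namely making sure that triviality of the action on $W$ inside $X$ transfers to triviality inside $X^{\red}$. This is handled by the reducedness of $\bT\times W$ together with the fact that $X^{\red}\to X$ is a monomorphism. Everything else is formal from maximality.
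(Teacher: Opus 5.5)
Your proof is correct and is essentially the paper's argument. The paper shows by Yoneda that both reduced schemes represent the same functor on reduced test schemes, using that $\GSch_{\bC}(Y_\bT,X^{\red})\cong\GSch_{\bC}(Y_\bT,X)$ for reduced $Y$. Your two inclusions are exactly this bijection evaluated at the two universal objects $((X^{\red})^{\bT})^{\red}$ and $(X^{\bT})^{\red}$, phrased through the maximality of the fixed-point subscheme, and they exhibit the identification as an actual equality of closed subschemes of $X$.
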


\begin{proof}
    This can be seen on the level of functors. By \cite{Fogarty_Fixed_point_schemes}, $X^{\bT}$ represents the functor
    \begin{align*}
        h_{X^\bT} \colon \Sch_{\bC} &\longrightarrow \Set\\
        Y &\longmapsto \GSch_{\bC}(Y_\bT,X),
    \end{align*}
    where $Y_\bT$ denotes $Y$ equipped with the trivial $\bT$--action. Therefore, $(X^\bT)^{\red}$ represents the restricted functor $h_{X^\bT}|_{\Sch_{\bC}^{\red}}$, where $\Sch_{\bC}^{\red}$ denotes the full subcategory of reduced schemes. Hence it suffices to see that we have a natural isomorphism of functors
    \begin{align*}
        h_{(X^{\red})^\bT}|_{\Sch_{\bC}^{\red}}\cong h_{X^\bT}|_{\Sch_{\bC}^{\red}}.
    \end{align*}
    This is straightforward to see: let $Y\in \Sch_{\bC}^{\red}$ be arbitrary, then
    \begin{align*}
        %\hskip0.147\textwidth
        %h_{(X^{\red})^\bT}(Y)=\GSch_{\bC}(Y_\bT,X^{\red})\cong \GSch_{\bC}(Y_\bT,X)=h_{X^\bT}(Y).\hskip0.147\textwidth\qedhere
        \fillqed{h_{(X^{\red})^\bT}(Y)=\GSch_{\bC}(Y_\bT,X^{\red})\cong \GSch_{\bC}(Y_\bT,X)=h_{X^\bT}(Y).}
    \end{align*}
\end{proof}

The following estimate for the dimension of the fixed locus of a torus action is key to the argument. It is a generalization of the fact that on a toric variety, there are at most finitely many points fixed by the torus action. Most likely it is known in the literature, but as we were unable to find an exact reference, and as parts of the proof will be useful later on, we include an elementary proof.

\begin{lemma}\label{lem:dim_T_action}
    Let $X$ be a variety with a torus action $\bT\acts X$. Then
    \begin{align*}
        \dim X^{\bT} \leq c(\bT \acts X).
    \end{align*}
\end{lemma}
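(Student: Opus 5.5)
Reduce first to an effective action, using \autoref{rem:effective_action}. Replacing $\bT$ by $\bT'=\bT/\ker(\bT\acts X)$ leaves $c(\bT\acts X)$ unchanged, since $\dim\bT'=\dim\bT-\dim\ker$. It also leaves the fixed locus unchanged, because a subscheme is $\bT$--invariant if and only if it is $\bT'$--invariant. By \autoref{lem:fixed_red} it is enough to bound the dimension of $(X^{\bT})^{\red}$. So we may assume the action is effective and $n=\dim\bT$, and we must show $\dim F\le \dim X-n$ for every irreducible component $F$ of $(X^{\bT})^{\red}$.

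Fix such a component $F$, of dimension $d$, and a general closed point $z\in F$. The plan is to work locally. By Sumihiro's theorem, applied to the normalization or via a $\bT$--stable affine neighbourhood argument, I would like a $\bT$--stable affine open $U\ni z$. If normality is an issue, I would pass to the normalization $\nu\colon\tX\to X$. The action lifts to $\tX$, the map is finite and surjective on fixed points over $F$ (up to taking a component dominating $F$), and dimensions are preserved. So we may assume $X$ is affine, $X=\Spec A$, with $\bT$ acting, hence with a grading $A=\bigoplus_{\chi\in M}A_\chi$, where $M$ is the character lattice of rank $n$.

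In this affine graded picture the fixed locus $X^{\bT}$ is cut out by the ideal generated by $\bigoplus_{\chi\ne 0}A_\chi$. Hence $X^{\bT}=\Spec A/I$ with $A/I$ a quotient of $A_0$, so $X^{\bT}\to X/\!\!/\bT=\Spec A_0$ is a closed immersion. This gives $\dim X^{\bT}\le \dim A_0$. The key step is then $\dim A_0\le \dim X-n$ for an effective action on an affine variety. Effectiveness means the weights of nonzero homogeneous elements generate a finite-index subgroup of $M$ (indeed the kernel is the common kernel of those characters). Pick homogeneous $f_1,\dots,f_n$ whose weights are linearly independent. They are algebraically independent over $\operatorname{Frac}(A_0)$: a relation, after grading, would force a monomial relation $\prod f_i^{a_i}\in\operatorname{Frac}(A_0)$ of weight $0$, contradicting independence of the weights. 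Together with a transcendence basis of $A_0$, this shows $\operatorname{trdeg}\operatorname{Frac}(A)\ge \dim A_0+n$.

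The main obstacle is the reduction to the affine case. Sumihiro requires normality, and the passage to the normalization must control fixed loci: a point of $\tX$ over a fixed point need not be fixed. It is, however, since $\bT$ is connected and the fiber is finite and $\bT$--stable. So $\nu^{-1}(F)\subseteq\tX^{\bT}$ set-theoretically, and $\dim\nu^{-1}(F)=\dim F$. I expect this step to need the most care. Alternatively, I would avoid Sumihiro by taking orbit closures: the orbit map $\bT\to \overline{\bT\cdot x}$ for general $x$ degenerates to $z$, and one could argue with a dimension count on $\bT\times F$. The graded-algebra argument seems cleanest.
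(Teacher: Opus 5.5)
Your proof is correct, but it takes a genuinely different route from the paper's.

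\textbf{The paper's route.} It never leaves the original, possibly non-normal, variety and inducts on $\dim X$. It takes a fixed point $z$ with $\dim_z X^{\bT}=d$. Fogarty's compatibility $C(X^{\bT},z)^{\red}=(C(X,z)^{\bT})^{\red}$ then gives a component $C$ of the tangent cone with $\dim C=\dim X$ and $\dim C^{\bT}=d$. Claim 2 shows that the linear action on $C$ has the same complexity as the action on $X$. Claim 3 shows that the induced action on $\bP C$ has complexity one less. Induction is then applied to $\bP C$.

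\textbf{Your route.} You normalize, and you correctly note two things: points over a fixed point are fixed, because the connected $\bT$ acts trivially on finite fibres, and the kernel, hence the complexity, is unchanged. Sumihiro then gives a $\bT$--stable affine chart $\Spec A$. There $X^{\bT}=\Spec A/I$ is a closed subscheme of $\Spec A_0$, and a transcendence-degree count gives $\dim A_0\le\dim A-n$. The independence step is cleanest phrased directly. For a relation $\sum_a c_a f^a=0$ with $c_a\in A_0$, the terms $c_a f^a$ lie in pairwise distinct graded pieces because the weights of the $f_i$ are linearly independent. So each term vanishes, and $c_a=0$ since $A$ is a domain.

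\textbf{Comparison.} Your argument is shorter, non-inductive and more conceptual: locally it shows $\dim X^{\bT}\le\dim X/\!\!/\bT\le c(\bT\acts X)$. The price is invoking Sumihiro and the normalization detour.

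The paper's route avoids both. More importantly, it yields as a by-product that at a fixed point the linear action on (a component of) the tangent cone has the same complexity as the action on $X$. This is exactly what is quoted in the proof of \autoref{lem:BB-decomp} to move the equality case to $T_z(X)$ and apply \autoref{lem:extend_lin_t_action}. With your approach, that fact would need its own argument. At a smooth fixed point one can use that $X^{\bK_0}$ is smooth with tangent space $T_z(X)^{\bK_0}$.
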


\begin{proof}
    We proceed by induction on the dimension of $X$: the case $\dim X=0$ is trivial. So suppose that the statement is established for varieties of dimension strictly smaller than $\dim X>0$. By replacing $\bT$ with the quotient of $\bT$ by the kernel of $\bT\acts X$, we may assume that $\bT\acts X$ is effective. If $X^{\bT}$ is finite (or empty) then there is nothing to prove. So denote $d\coloneqq \dim X^{\bT}$ and assume $d>0$ without loss of generality. Let $z\in X^{\bT}$ be a closed point with $\dim_z X^{\bT}=d$. Consider the tangent cone $C(X,z)$ of $X$ at $z$ as a closed subscheme of the Xariski tangent space $T_z(X)$. As $z$ is a fixed point, we obtain a linear action of $\bT$ on $T_z(X)$ for which $C(X,z)$ is $\bT$-stable. We now proceed through a series of claims.\medskip

    \noindent\textbf{Claim 1:} \textit{There exists an irreducible component $C$ of $C(X,z)^{\red}$ such that $\dim C=\dim X$ and $\dim C^{\bT}=d$.}\medskip
    
    \noindent\textit{Proof of Claim 1:} Let $C(X,z)^{\red}=C_1\cup\cdots\cup C_r$ be the decomposition into irreducible components. Note that by \autoref{lem:action_irred_comp}, every $C_i$ is stable under the action of $\bT$. Therefore, by \cite[Theorem 5.2]{Fogarty_Fixed_point_schemes} we have
    \begin{align*}
        C(X^{\bT},z)^{\red}\expl{=}{\cite[Theorem 5.2]{Fogarty_Fixed_point_schemes}}(C(X,z)^{\bT})^{\red}\expl{=}{\autoref{lem:fixed_red}}((C(X,z)^{\red})^{\bT})^{\red}\expl{=}{\autoref{lem:action_irred_comp}}(C_1^{\bT})^{\red}\cup\cdots\cup(C_r^{\bT})^{\red}.
    \end{align*}
    As $\dim C(X^{\bT},z)^{\red}=d$, there must exist an $i$ such that $C_i^{\bT}$ has dimension $d$. Finally, as $X$ is irreducible, $C(X,z)$ is equidimensional of dimension $\dim X$, so in particular $\dim C=\dim X$.\qedclaim\medskip

    \noindent\textbf{Claim 2:} \textit{We have $c(\bT\acts C)=c(\bT\acts X)$.}\medskip
    
    \noindent\textit{Proof of Claim 2:}
    Let $\bK$ be the kernel of $\bT\acts C$, and let $\bK_0\leq \bK$ be the connected component of the identity. Then $\bK_0$ is a torus, and thus 
    \begin{align*}
        C(X^{\bK_0},z)^{\red}\expl{=}{\cite[Theorem 5.2]{Fogarty_Fixed_point_schemes}}(C(X,z)^{\bK_0})^{\red}\expl{=}{\autoref{lem:fixed_red}}((C(X,z)^{\red})^{\bK_0})^{\red}\supseteq (C^{\bK_0})^{\red}=C.
    \end{align*}
    Therefore, we obtain 
    \begin{align*}
        \dim_z X^{\bK_0}=\dim C(X^{\bK_0},z)\geq\dim C= \dim X.
    \end{align*}
    Hence we must have $X^{\bK_0}=X$. As $\bT\acts X$ is effective, $\bK_0$ must be trivial, so $\bK$ is a finite subgroup of $\bT$. Hence we obtain
    \begin{align*}
        c(\bT\acts C) = \dim X - \dim \bT +\underbrace{\dim\bK}_{=0}= c(\bT \acts X),
    \end{align*}
    so we are done.\qedclaim\medskip
    
    Now as $\bT\acts T_z(X)$ is linear and $C\subseteq T_z(X)$ is a cone, we obtain an action of $\bT$ on the projectivization $\bP C$ of $C$.\medskip

    \noindent\textbf{Claim 3:} \textit{We have $c(\bT\acts \bP C)=c(\bT\acts X)-1$.}\medskip
    
    \noindent\textit{Proof of Claim 3:} As $\dim\bP C=\dim X-1$, we have
    \begin{align*}
        c(\bT\acts \bP C)\geq \dim (\bP C)-\dim\bT=\dim X-1-\dim\bT=c(\bT\acts X)-1.
    \end{align*}
    Suppose by contradiction that the inequality is strict; then the kernel of $\bT\acts\bP C$ must be infinite. As the kernel of $\bT\acts C$ is finite by Claim 2, there must exist $t\in \bT$ acting non-trivially on $C$ but trivially on $\bP C$. So every point of $C$ is an eigenvector of $t$, or in other words, $C$ is contained in the union of eigenspaces of $t$. As they are finite in number and $C$ is irreducible, $C$ must be contained in a single eigenspace of $t$, i.e., there exists $\lambda\in\bC$ such that $t.v=\lambda v$ for all $v\in C$. As $t$ acts non-trivially on $C$ we must have $\lambda\neq 1$. But then $C^{\bT}=\{0\}$, which by Claim 1 gives $d=0$, contradiction.
    \qedclaim\medskip

    We can now conclude the induction step: as $\dim \bP C=\dim X-1$, we have by the induction hypothesis
    \begin{align*}
        c(\bT\acts X)-1\expl{=}{\text{Claim 3}} c(\bT\acts\bP C)\geq \dim(\bP C)^{\bT}\geq \dim \bP(C^{\bT})=\dim C^{\bT}-1\expl{=}{\text{Claim 1}}d-1.
    \end{align*}
    Adding $1$ to both sides then concludes the proof.
\end{proof}

The following lemma discusses the equality case of \autoref{lem:dim_T_action} for linear torus actions on a vector space. We will need it to extend such torus actions to a partial compactification of $\bT$, see \autoref{subsec:Birula}.

\begin{lemma}\label{lem:extend_lin_t_action}
    Let $V$ be a finite dimensional vector space and let $\bT\acts V$ be a linear torus action. Assume that we have equality for the bound given by \autoref{lem:dim_T_action}, i.e.
    \begin{align*}
        \dim V^{\bT}=c(\bT\acts V).
    \end{align*}
    Then there exists an isomorphism $\bT\cong\bG_m^{e}$ such that the corresponding weight decomposition of $V$ has no negative weights.
\end{lemma}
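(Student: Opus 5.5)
We reduce to an effective action and then argue with the weight decomposition. First replace $\bT$ by $\bT/\ker(\bT\acts V)$ via \autoref{rem:effective_action}; this changes neither $V^{\bT}$ nor $c(\bT\acts V)$. Moreover, any isomorphism $\bT/\ker\cong\bG_m^e$ with nonnegative weights lifts to a choice of coordinates on $\bT$ for which the weights, being pulled back, are still nonnegative. This lifting uses that $\ker\to\bT\to\bT/\ker$ splits up to the finite part; more precisely, pick a splitting $\bT\cong \ker_0\times\bT'$ with $\bT'\to\bT/\ker$ an isogeny and argue on character lattices. So from now on assume the action is effective and write $e=\dim\bT$, $M=X^*(\bT)\cong\bZ^e$. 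Decompose $V=V_0\oplus\bigoplus_{\chi\neq 0}V_\chi$, where $V_0=V^{\bT}$. Effectiveness means the characters $\chi$ with $V_\chi\neq0$ generate a finite-index sublattice of $M$, and in fact generate $M$ itself, since the kernel is the common kernel of these characters. The hypothesis reads $\dim V_0=\dim V-e$, that is, $\sum_{\chi\neq0}\dim V_\chi=e$.

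Next, let $\chi_1,\dots,\chi_m$ be the nonzero weights counted with multiplicity, so $m=e$. Since they span $M\otimes\bQ$, which has dimension $e$, they are linearly independent over $\bQ$. Because they also generate $M$ as a group, they form a $\bZ$-basis of $M$. Taking the dual basis gives an isomorphism $\bT\cong\bG_m^e$ in which the coordinate characters are exactly $\chi_1,\dots,\chi_e$. With respect to this isomorphism, every weight occurring in $V$ is either $0$ or a standard basis vector, so in particular no weight is negative.

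The main subtlety is the integrality step: we need the weights to form a $\bZ$-basis and not merely a $\bQ$-basis. Effectiveness provides exactly this. If the weights generated a proper finite-index sublattice $L\subsetneq M$, then the nontrivial finite group $(M/L)^\vee\subseteq\bT$ would act trivially. If we prefer not to pass to the effective quotient, it suffices to show that the weights are nonnegative in some coordinates. We pick a $\bZ$-basis of the saturation of the span of the weights, extend it to a basis of $M$, and note that each weight is then a nonnegative rational combination of the $\chi_i$; the coefficients are integers because $\chi_i$ is itself a weight. The remaining task is routine bookkeeping: checking that $V^{\bT}=V_0$, and that $\dim\ker$ equals $e$ minus the rank of the span of the weights, which gives the count used above.
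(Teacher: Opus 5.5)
Your treatment of the effective case is correct, and it gives more than the paper's: once $\bT\acts V$ is effective, the $e$ nonzero weights generate $X^*(\bT)$, so they form a $\bZ$-basis, and in the dual coordinates every weight is $0$ or a standard basis vector. The gap is in the reduction to that case. When $\ker(\bT\acts V)$ is disconnected, the ``lifting'' from $\bT/\ker$ to $\bT$ is not bookkeeping. On $\bT'\cong\bT/\ker_0$, the pulled-back weights are a $\bZ$-basis only of a finite-index sublattice of $X^*(\bT')$. You still have to find a $\bZ$-basis of $X^*(\bT')$ in which all of them have nonnegative coordinates. That is exactly the finite-kernel case of the lemma, which is its whole content, and ``argue on character lattices'' does not supply it.

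For example, take $\bT=\bG_m^2$ acting on $\bC^2$ with weights $(1,1)$ and $(1,-1)$. The kernel is $\{\pm(1,1)\}$, and your argument handles the effective quotient. But in the given coordinates the weight $(1,-1)$ is negative, so you must pass to something like $(s,t)\mapsto(st^{-1},t)$, in which the weights become $(1,2)$ and $(1,0)$. Here no coordinates make the weights standard basis vectors, since they generate an index-$2$ sublattice. Your fallback paragraph does not repair this either. Relative to an arbitrary basis of the saturation of the span, weights can have negative coordinates. The observation that each $\chi_i$ is a nonnegative integer combination of the $\chi_i$ is beside the point, because the $\chi_i$ need not be a basis of $X^*(\bT)$.

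What is missing is the lattice statement: if $w_1,\dots,w_e\in\bZ^e$ are $\bQ$-linearly independent, there is $L\in\GL_e(\bZ)$ such that $L(w_1\ \cdots\ w_e)$ has nonnegative entries. The paper proves precisely this. It quotients only by the identity component of the kernel, which is a torus and splits off as a direct factor, so that lift really is trivial. In the finite-kernel case it then uses the Euclidean algorithm to bring $M'$ to upper-triangular form with positive diagonal, and adds multiples of lower rows to make the entries above the diagonal nonnegative.

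An alternative proof of the lattice statement: choose a primitive $p$ in the interior of the dual cone $\{u:\langle u,w_i\rangle\geq 0\ \forall i\}$. Extend it to a basis $p,u_2,\dots,u_e$ of the dual lattice, and replace each $u_j$ by $u_j+kp$ with $k\gg 0$. With either argument added, your proof would be complete.
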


\begin{proof}
    Denote the connected component of the identity of the kernel of $\bT\acts V$ by $\bK$ and $\bT'\coloneqq \bT/\bK$. As short exact sequences of tori are split (see \cite[Corollary on page 114, Section III.8.5]{Borel_Linear_algebraic_groups}) and $\bK\acts V$ is trivial, it suffices to prove the statement for $\bT'$. Hence we may replace $\bT$ by $\bT'$, so that without loss of generality, we may assume that $\bT\acts V$ has finite kernel. In particular, we have $c(\bT\acts V)=\dim V-\dim \bT$.
    
    Denote $n=\dim V$, $d=\dim V^{\bT}$ and $e=\dim \bT$, so that by hypothesis $d+e=n$. Fix some isomorphism $\Phi\colon \bT\tilde{\to}\bG_m^e$, and for $t\in \bT$ we denote 
    \[
        \Phi(t):=\underline{t}=(t_1,\ldots,t_e).
    \]
    Consider the corresponding weight decomposition of $\bG_m^e\acts V$ (see e.g. \cite[Chapter III, \S8]{Borel_Linear_algebraic_groups}): there exists a basis $v_1,\ldots,v_n$ of $V$ such that for all $t\in \bT$ and $1\leq j\leq n$ we have 
    \[
        t\cdot v_j=\underline{t}^{m_{\bullet j}}v_j,
    \] 
    where $m_{\bullet j}=(m_{1j}\ \ldots \ m_{ej})^{\intercal}\in\bZ^{e}$ and 
    \[
        \underline{t}^{m_{\bullet j}}\coloneqq t_1^{m_{1j}}\cdots t_e^{m_{ej}}.
    \]
    In particular, the dimension $d$ of $V^{\bT}$ is equal to the number of $0$--columns of the integer matrix $M=(m_{ij})_{\substack{1\leq i\leq e\\ 1\leq j\leq n}}$. As $d+e=n$, we may assume, up to reordering the $v_j$'s, that the $0$--columns of $M$ are precisely $m_{\bullet,e+1},\ldots,m_{\bullet n}$. Denote $S=\operatorname{Span}\{v_1,\ldots,v_e\}$ and $M'=(m_{ij})_{1\leq i,j\leq e}$, so that the action of $\bT$ on $S$ is given by $M'$. Note that as $\bT\acts V$ has finite kernel and $V=S\oplus V^{\bT}$, the action $\bT\acts S$ also has a finite kernel. In particular, $M'$ has non-zero determinant, i.e. is invertible over $\bQ$.

    We now claim that there exists $L\in \GL_e(\bZ)$ such that $LM'$ has only non-negative entries. Indeed, we can triangularize $M'$ by repeated application of Euclid's Algorithm: there exists $L'_1\in \GL_e(\bZ)$ such that $(L'_1m_{\bullet1})^{\intercal}=(*\ 0\ \ldots\ 0)$, and then we repeat the procedure with $((L'_1M')_{i,j})_{2\leq i,j\leq e}$. At the end, we obtain $L'\in\GL_e(\bZ)$ such that $L'M'$ is upper-triangular. As $\det(M')\neq 0$, the elements on the diagonal are non-zero, and hence up to multiplying some rows of $L'$ by $-1$, we may suppose that they are strictly positive. But then we can use these elements on the diagonal to turn every element above positive with appropriate row operations. In the end, we obtain $L\in\GL_e(\bZ)$ such that $LM'$ is upper-triangular with non-negative entries.

    Now denote by $\varphi$ the automorphism of $\bG_m^e$ given by
    \begin{align*}
        \varphi\colon\bG_m^e &\to \bG_m^e\\
        \underline{\lambda}&\mapsto (\underline{\lambda}^{L_{\bullet1}},\ldots,\underline{\lambda}^{L_{\bullet e}}),
    \end{align*}
    and let $\Psi=\varphi^{-1}\circ\Phi$ be an adjusted identification $\bT\cong\bG_m^{e}$. Under this identification, the weight decomposition of $\bT\acts V$ is given by the matrix $N\coloneqq (LM'\ 0_{e\times d})$, so in particular it has only non-negative entries.
\end{proof}

\subsection{Minimal model program}\label{subsec:Albanese_MMP}

We recall the definitions of the MMP singularities of a pair $(X,\Delta)$, see \cite{Kollar_Singularities_of_the_minimal_model_program}. Let $\pi \colon Y \to X$ be a proper birational morphism from a normal variety $Y$. We choose a canonical divisor $K_Y$ such that $\pi_*K_Y=K_X$, and we write
\begin{align*}
    K_Y + \pi_*^{-1}\Delta = \pi^*(K_X+\Delta)+\sum_i a(E_i, X, \Delta) E_i,
\end{align*}
where $\{E_i\}_i$ is the set of prime exceptional divisors and $\pi_*^{-1}\Delta $ is the strict transform of $\Delta$. 
The coefficient $a(E, X, \Delta)$ is called the \emph{discrepancy} of $(X, \Delta)$ along $E$, and it depends
exclusively on the divisorial valutation associated to $E$.

\begin{definition}\label{def:klt_lc}
    A pair $(X, \Delta)$ is said to be \emph{log canonical} (resp.\ \emph{Kawamata log terminal}) if for every $E$ as above, $a(E, X, \Delta) \geq -1$ (resp.\ $a(E, X, \Delta) > -1$ and $\lfloor \Delta \rfloor = 0$). We usually abbreviate `log canonical' to `lc' and `Kawamata log terminal' to `klt'.
\end{definition}

A useful class of singularities sitting in between klt and lc are \emph{divisorial log terminal} (\emph{`dlt'} for short) singularities. Given an lc pair $(X, \Delta)$, an irreducible subvariety $Z$ is called an \emph{lc center} if there exists a divisor $E$ on a birational model of $X$ of discrepancy $-1$ and whose center on $X$ is equal to $Z$. A \emph{minimal lc center} is an lc center that is minimal for inclusion among lc centers.

\begin{definition}\label{def:dlt}
    An lc pair $(X, \Delta)$ is said to be \emph{dlt} if $(X, \Delta)$ has simple normal crossing (see \cite[Definition 1.7]{Kollar_Singularities_of_the_minimal_model_program}) at the generic point of every lc center.
\end{definition}

% \subsubsection{MMP and torus actions}\label{subsubsec:MMP_torus}

% We now state some useful facts about the MMP when torus actions are involved. Note that everything would work the same for the action of an arbitrary connected algebraic group. For the definitions of lc, dlt, resp.\ klt singularities, and of (minimal) lc centers, we refer the reader to \autoref{def:klt_lc}, \autoref{def:dlt} and the paragraph in between. We start with the following remark.

\begin{remark}\label{rem:dlt_smooth_locus}
    In particular, if $Z$ is an lc center in a dlt pair $(X,\Delta)$, then $X$ is smooth at the generic point of $Z$. This implies, e.g., that $X^{\sm}\cap Z\neq \emptyset$.
\end{remark}

Next, we define the notions `$K$--trivial' and `log Calabi--Yau pair'.

\begin{definition}\label{def:K-trivial}
    An lc pair $(X,\Delta)$ is said to be $K$--trivial if $K_X+\Delta\equiv 0$. If $(X,\Delta)$ is projective, then by \cite[Theorem 1.2]{Gongyo_Abundance_theorem_for_numerically_trivial_log_canonical}, this is equivalent to $K_X+\Delta\sim_{\bQ}0$. We then call $(X,\Delta)$ a log Calabi--Yau pair.
\end{definition}

\subsubsection{MMP and torus actions}\label{subsubsec:MMP_torus}

We now state some useful facts about the above notions when torus actions are involved. Note that everything would work the same for the action of an arbitrary connected algebraic group. We start by defining torus actions on pairs.

\begin{definition}
    Let $(X,\Delta)$ be a pair and $\bT$ a torus. An action of $\bT$ on $(X,\Delta)$ is an action $\bT\acts X$ such that $t_{*}\Delta=\Delta$ for all $t\in\bT$. Note that by \autoref{lem:action_irred_comp}, this is equivalent to requiring that every component of $\Supp(\Delta)$ is $\bT$--stable. We write $\bT\acts (X,\Delta)$ to denote such an action.
\end{definition}

We now discuss \emph{dlt modifications}, which provide a useful technique to pick a birational model with favorable properties. In the next lemma, we show that this construction can be done $\bT$--equivariantly. Essentially, this is possible because singularities can be resovled functorially, so that in particular any group action lifts to the resolution, and running the MMP is a discrete procedure, so it has to be preserved under the action of a connected group.

\begin{lemma}[\protect{\cite[Corollary 1.36]{Kollar_Singularities_of_the_minimal_model_program}}]\label{lem:dlt_mod}
    Let $(X,\Delta)$ be an lc pair and suppose we have a torus action $\bT\acts (X,\Delta)$. Then there exists a dlt pair $(X^{\dlt},\Delta^{\dlt})$ and a proper birational morphism $g\colon X^{\dlt}\to X$ such that
    \begin{enumerate}[start=0]
        \item\label{lem:dlt_mod_T-equivariant} there exists $\bT\acts(X^{\dlt},\Delta^{\dlt})$ such that $g$ is $\bT$--equivariant and $c(\bT\acts X^{\dlt})=c(\bT\acts X)$;
        \item\label{lem:dlt_mod_factorial} $X^{\dlt}$ is $\bQ$--factorial;
        \item\label{lem:dlt_mod_crepant}$K_{X^{\dlt}}+\Delta^{\dlt}\sim_{\bQ} g^{*}(K_X+\Delta)$.
    \end{enumerate}
\end{lemma}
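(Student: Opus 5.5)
We follow the standard construction of a dlt modification in \cite[Corollary 1.36]{Kollar_Singularities_of_the_minimal_model_program} and check that each step can be made $\bT$--equivariant.

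\emph{Step 1: equivariant log resolution.} Take a functorial log resolution $h\colon Y\to X$ of $(X,\Supp\Delta)$, as in \cite[Theorem 3.45]{Kollar_Lectures_on_resolution_of_singularities}. By functoriality with respect to smooth morphisms, the action morphism and the projection $\bT\times X\to X$ both lift, so $\bT$ acts on $Y$ and $h$ is $\bT$--equivariant. We set
\[
\Delta_Y\coloneqq h^{-1}_*\Delta+\sum E_i ,
\]
where the $E_i$ are all $h$--exceptional prime divisors. Each $E_i$ is $\bT$--stable by \autoref{lem:action_irred_comp} applied to the exceptional locus, which is $\bT$--stable. The strict transform of $\Delta$ is $\bT$--stable because its components are. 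Hence $\bT\acts(Y,\Delta_Y)$, and $(Y,\Delta_Y)$ is dlt, since it is snc with coefficients $\le 1$.

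\emph{Step 2: equivariant MMP.} Since $(X,\Delta)$ is lc, we have $K_Y+\Delta_Y\sim_{\bQ,X} \sum_i (1+a(E_i,X,\Delta))E_i=:F\ge 0$, and $F$ is exceptional. We run a $(K_Y+\Delta_Y)$--MMP over $X$ with scaling of an ample divisor. As in \cite[Corollary 1.36]{Kollar_Singularities_of_the_minimal_model_program}, via \cite{Birkar_Cascini_Hacon_McKernan}-type termination for this special setting (negativity lemma), it terminates with a model $X^{\dlt}$ on which the pushforward of $F$ vanishes. Then $K_{X^{\dlt}}+\Delta^{\dlt}\sim_{\bQ} g^*(K_X+\Delta)$, $X^{\dlt}$ is $\bQ$--factorial, and the pair is dlt. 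This gives \eqref{lem:dlt_mod_factorial} and \eqref{lem:dlt_mod_crepant}. For equivariance we invoke \cite[Remark 2.3]{Floris_A_note_on_the_G-Sarkisov_program}. The relative cone of curves is acted on by the connected group $\bT$ through a discrete set of extremal rays, so $\bT$ fixes every extremal ray. Consequently every contraction, and every flip constructed as a relative $\operatorname{Proj}$ of $\bT$--linearized algebras, is $\bT$--equivariant. Boundaries remain $\bT$--stable as pushforwards of $\bT$--stable divisors, so $\bT\acts(X^{\dlt},\Delta^{\dlt})$ and $g$ is equivariant.

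\emph{Step 3: complexity.} Since $g$ is birational and equivariant, it is an isomorphism over a dense open $\bT$--stable subset $U\subseteq X$. Hence $\dim X^{\dlt}=\dim X$. Moreover, $t\in\bT$ acts trivially on $X^{\dlt}$ if and only if it acts trivially on $g^{-1}(U)\cong U$, if and only if it acts trivially on $X$, using separatedness and density. So the kernels agree, and $c(\bT\acts X^{\dlt})=c(\bT\acts X)$.

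\emph{Main obstacle.} The delicate point is Step 2: the equivariance of each MMP step, and the fact that the MMP terminates while achieving crepancy. We rely on Kollár's argument for the latter and on Floris's remark for the former, noting that connectedness of $\bT$ is exactly what forces $\bT$ to fix each extremal ray.
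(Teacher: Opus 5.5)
Your proposal is correct and follows the same route as the paper: a functorial (hence $\bT$--equivariant) log resolution, then the $(K+h^{-1}_*\Delta+\sum_i E_i)$--MMP over $X$ from \cite[Corollary 1.36]{Kollar_Singularities_of_the_minimal_model_program}, which is automatically a $\bT$--MMP by \cite[Remark 2.3]{Floris_A_note_on_the_G-Sarkisov_program}, and finally a comparison of kernels to get the complexity. The differences are minor: you obtain $\bT$--stability of the $E_i$ from \autoref{lem:action_irred_comp} applied to the $\bT$--stable exceptional locus rather than from functoriality, and your Step 3 uses that $g$ is an isomorphism over a dense $\bT$--stable open set, which also gives the inclusion $\ker(\bT\acts X)\subseteq\ker(\bT\acts X^{\dlt})$ that the paper's appeal to surjectivity of $g$ alone does not literally provide.
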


\begin{proof}
    We follow the proof of \cite[Corollary 1.36]{Kollar_Singularities_of_the_minimal_model_program}, keeping track of the torus action at every step. We start with a functorial log resolution $f\colon(\tX,\widetilde{\Delta})\to(X,\Delta)$, which exists by \cite[Theorem 4.4]{Greb_Kebekus_Kovacs_Extension_theorems_for_differential_forms_and_Bogomolov_Sommese_vanishing_on_log_canonical_varieties}, and where $\widetilde{\Delta}$ is the total transform of $\Delta$ (see \cite[Definition 2.4]{Greb_Kebekus_Kovacs_Extension_theorems_for_differential_forms_and_Bogomolov_Sommese_vanishing_on_log_canonical_varieties}). By functoriality, the $\bT$--action on $(X,\Delta)$ lifts to a $\bT$--action on $(\tX,\widetilde{\Delta})$ such that $f$ is $\bT$--equivariant, see \cite[Proposition 3.9.1]{Kollar_Lectures_on_resolution_of_singularities}. Now to construct a dlt--model $(X^{\dlt},\Delta^{\dlt})$ of $(X,\Delta)$, we run a $(K_{\wt{X}}+f_*^{-1}\Delta+\sum_i E_i)$--MMP on $\tX$ where $\{E_i\}_i$ is the set of $f$--exceptional divisors, see \cite[1.35]{Kollar_Singularities_of_the_minimal_model_program}. Note that $\bT$ still acts on $(\tX, f_*^{-1}\Delta+\sum_i E_i)$: as the total transform $\widetilde{\Delta}$ is $\bT$--stable also the strict transform $f_*^{-1}\Delta$ is (again by \autoref{lem:action_irred_comp}), and also the $E_i$ are $\bT$--stable by functoriality of the resolution. This results in a dlt model $g\colon (X^{\dlt},\Delta^{\dlt})\to (X,\Delta)$, which by \cite[Corollary 1.36]{Kollar_Singularities_of_the_minimal_model_program} satisfies \autoref{lem:dlt_mod_factorial} and \autoref{lem:dlt_mod_crepant}. Then, by \cite[Remark 2.3]{Floris_A_note_on_the_G-Sarkisov_program} (which is based on Blanchard's lemma, see, e.g., \cite[Proposition 4.2.1]{Brion_Samuel_Uma_Lectures_on_the_structure_of_algebraic_groups_and_geometric_applications}), this MMP is automatically a $\bT$--MMP, so that in particular we obtain a $\bT$--action on $(X^{\dlt},\Delta^{\dlt})$. Note that the kernel of $\bT\acts X^{\dlt}$ agrees with the kernel of $\bT\acts X$, as $g\colon X^{\dlt}\to X$ is surjective. In particular, we have $c(\bT\acts X^{\dlt})=c(\bT\acts X)$, so we are done. 
\end{proof}

We now move to discussing lc centers. On the one hand, an important property of lc centers is that despite potentially having higher codimension, we have a version of adjunction for them. On the other hand, there are only finitely many of them, so again they have to be preserved under torus actions. This is the content of the following lemma.

\begin{lemma}[\protect{\cite[4.29, Theorem 4.19]{Kollar_Singularities_of_the_minimal_model_program}}]\label{lem:min_lc_fixed}
    Let $(X,\Delta)$ be a projective lc pair with a torus action $\bT\acts (X,\Delta)$, and let $Z$ be an lc center. Then
    \begin{enumerate}
        \item\label{lem:min_lc_fixed_stable} $Z$ is $\bT$--stable;
        \item\label{lem:min_lc_fixed_adjunction} if $X$ is dlt, then $Z$ is normal and there exists a boundary $\Diff^*_Z\Delta$ on $Z$ such that
        \begin{enumerate}
            \item\label{lem:min_lc_fixed_adjunction_T-equivariant} $\bT$ acts on $(Z,\Diff^*_Z\Delta)$;
            \item\label{lem:min_lc_fixed_adjunction_singularities} $(Z,\Diff^*_Z\Delta)$ is dlt, and if $Z$ is minimal, this pair is klt;
            \item\label{lem:min_lc_fixed_adjunction_formula} we have $(K_X+\Delta)|_Z\sim_{\bQ} K_Z+\Diff^*_Z\Delta$;
        \end{enumerate}
        \item\label{lem:min_lc_fixed_invariant} if $Z$ is minimal and $(X,\Delta)$ is $K$--trivial, then 
        $Z$ is $\bT$--invariant.
    \end{enumerate}
\end{lemma}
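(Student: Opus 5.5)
Part \autoref{lem:min_lc_fixed_stable} comes from finiteness of lc centers together with connectedness of $\bT$. An lc pair has only finitely many lc centers \cite[4.29]{Kollar_Singularities_of_the_minimal_model_program}. For $t\in\bT$, the automorphism $t$ preserves $\Delta$, so it preserves discrepancies, and $t(Z)$ is again an lc center. Thus $\bT$ permutes a finite set $\{Z_1,\dots,Z_k\}$ of closed subvarieties. Consider the union $W=Z_1\cup\dots\cup Z_k$ with its reduced structure. The action morphism restricted to $\bT\times W$ lands set-theoretically in $W$, and since $\bT\times W$ is reduced it factors through $W$. So $W$ is $\bT$-stable, and by \autoref{lem:action_irred_comp} each irreducible component of $W$ is $\bT$-stable. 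This does not immediately handle an $Z_i$ contained in another $Z_j$, so I would argue more directly. The orbit map $\bT\to\{Z_1,\dots,Z_k\}$, $t\mapsto t(Z)$, has fibers $\{t : t(Z)=Z_i\}$. Each fiber is closed, since it is cut out by the condition $t(Z)\subseteq Z_i$ applied both ways, which is closed because $Z_i$ is closed. Finitely many disjoint closed sets cover the irreducible $\bT$, so $t(Z)=Z$ for all $t$. Equivalently, the image of $\bT\times Z$ is irreducible and contained in $W$, hence contained in some $Z_i$ containing $Z$, and it has the same dimension as $Z$, so it equals $Z$.

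For \autoref{lem:min_lc_fixed_adjunction}, I take normality of $Z$ and the existence of $\Diff^*_Z\Delta$ with properties (b) and (c) from \cite[Theorem 4.19]{Kollar_Singularities_of_the_minimal_model_program}. In the dlt case, $Z$ is a component of an intersection of components of $\lfloor\Delta\rfloor$, and the different is obtained by iterated divisorial adjunction. Minimality of $Z$ gives that $(Z,\Diff^*_Z\Delta)$ has no lc centers, so the pair is klt. For (a) I would use that the construction is canonical. Each $t\in\bT$ is an automorphism of $(X,\Delta)$ mapping $Z$ to itself by part (1). By the uniqueness and canonicity of the different (it is defined by intrinsic discrepancy data along $Z$), $t|_Z$ pulls $\Diff^*_Z\Delta$ back to itself. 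So $t_*\Diff^*_Z\Delta=\Diff^*_Z\Delta$.

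Part \autoref{lem:min_lc_fixed_invariant} is the main point. I would first reduce to the dlt case by the equivariant dlt modification \autoref{lem:dlt_mod}: a minimal lc center of $(X,\Delta)$ is the image of a minimal lc center $Z'$ of $(X^{\dlt},\Delta^{\dlt})$. Invariance of $Z'$ then gives invariance of $Z=g(Z')$ by equivariance of $g$. In the dlt case, $(Z,\Diff^*_Z\Delta)$ is a projective klt pair with $K_Z+\Diff^*_Z\Delta\sim_\bQ0$, by (c) and $K$-triviality, and $\bT$ acts on it by (a). Let $\bT'$ be the image of $\bT$ in $\Aut(Z)$; it is a torus. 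I need $\bT'$ to be trivial. Suppose it is not. Then $Z$ carries a nontrivial $\bG_m$-action, and a general orbit closure is a rational curve $C$ through a general point. I expect the key obstacle to be turning this into a contradiction. My first attempt would be the following. A $\bG_m$-action on a projective variety is linearizable on an ample line bundle, and $\bG_m$ acting nontrivially means $Z$ is covered by rational curves $C$ which are the closures of orbits. For a klt pair $(Z,B)$ with $K_Z+B\equiv0$ and $B$ invariant, consider the fixed points $0,\infty\in C$. The vector field $\xi$ generating the action is a nonzero section of $T_Z$, and it is tangent to $B$. Contracting with a generator of $\omega_Z^{[m]}(mB)\cong\cO_Z$ gives nonvanishing data along $C$. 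This contradicts the fact that $\xi$ vanishes at the fixed points, where a klt pair forces the pole order of the invariant log form to be too small. Concretely, I would use the criterion of Greb--Kebekus--Peternell / Amerik type: a klt Calabi--Yau pair has no nonzero vector fields whose flow is a torus unless $Z$ is covered by rational curves $C$ with $(K_Z+B)\cdot C<0$. This is impossible, because the fixed points of $C$ must lie in $\Supp B$ with coefficient at least $1$ in the residue computation, contradicting klt. If this residue argument is too delicate, the fallback is to quote that $\Aut^0$ of a klt log Calabi--Yau pair is an abelian variety, which has no nontrivial torus.
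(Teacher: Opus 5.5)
Parts (1) and (2) are fine and essentially follow the paper: finiteness of lc centers plus connectedness of $\bT$ for (1), and canonicity of the construction of $\Diff^*_Z\Delta$ for (2)(a). In (1) the paper avoids nested centers by working only with the finitely many lc centers of dimension $\dim Z$, each of which is then an irreducible component of their union. Your closed-fibre argument handles this equally well.

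Your reduction of (3) to the dlt case is also the paper's, but you should say how the center upstairs is chosen. Take $S$ minimal among lc centers of $(X^{\dlt},\Delta^{\dlt})$ dominating $Z$; one exists because $g$ is crepant. A strictly smaller lc center of $(X^{\dlt},\Delta^{\dlt})$ would map onto an lc center strictly inside $Z$, so $S$ is a minimal lc center.

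\textbf{The gap.} The real gap is the decisive step of (3): you must show that a projective klt pair $(S,\Delta_S)$ with $K_S+\Delta_S\sim_{\bQ}0$ admits no nontrivial torus action preserving $\Delta_S$. Your vector-field/residue sketch does not prove this.
\begin{itemize}
\item The ``criterion'' you attribute to Greb--Kebekus--Peternell/Amerik is not something you can cite. Since $(K_S+\Delta_S)\cdot C=0$ for every curve $C$, it is just a restatement of what you want.
\item The claim that fixed points of orbit closures force a boundary coefficient $\geq 1$ is exactly what needs proof. It also has to be phrased for divisors over $S$, not for points of $S$.
\item Your fallback is true, provided it is read as $\Aut^0$ of the pair, i.e.\ automorphisms preserving $\Delta_S$; $\Aut^0(S)$ alone can be $\mathrm{PGL}_2$. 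But you quote it without a source, and it is the entire content of (3).
\end{itemize}

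\textbf{How the paper closes it.} By Sumihiro's theorem there is an ample $\cL$ on $S$ with $t^*\cL\cong\cL$ for all $t\in\bT$. Hence $\bT(\bC)$ maps into $\Aut(S,\Delta_S;\cL)$, which is finite by Patakfalvi--Zdanowicz, Proposition 10.1. So $\bT/\ker(\bT\acts S)$ is a torus with finitely many points, hence trivial. Equivariance and surjectivity of $S\to Z$ then give triviality on $Z$.

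\textbf{Rigorous version of your idea.} Your residue intuition can also be made into a proof. A nontrivial $\bG_m$-action on $S$ is equivariantly birational to $Y\times\bP^1$ with $\bG_m$ scaling the second factor. The crepant transform $B'$ of $\Delta_S$ on $Y\times\bP^1$ is $\bG_m$-invariant, so its only components dominating $Y$ are $Y\times\{0\}$ and $Y\times\{\infty\}$. Restricting $K+B'\sim_{\bQ}0$ to a general fibre $\bP^1$ forces their coefficients to sum to $2$. But klt forces each of these coefficients to be $<1$, a contradiction.
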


\begin{proof}
    Let $\sZ$ be the set of lc centers of $(X,\Delta)$ of dimension $d\coloneqq\dim Z$. Note that $\sZ$ is finite by \cite[Theorem 1.1]{Ambro_Basic_properties_of_log_canonical_centers}. Furthermore, for every $t\in\bT$ and every $Z'\in\sZ$, as $t$ is an automorphism of $X$, we have that $t(Z')$ is an lc center of dimension $d$ of $(X,t_*\Delta)=(X,\Delta)$. Hence the closed subset $\bigcup_{Z'\in\sZ}Z'$ is $\bT$--invariant. By \autoref{lem:action_irred_comp}, all the components are $\bT$-invariant, which proves \autoref{lem:min_lc_fixed_stable}.

    Now assume that $X$ is dlt, then $Z$ is normal by \cite[Theorem 4.16]{Kollar_Singularities_of_the_minimal_model_program}. The construction of $\Diff^*_Z\Delta$ is done via higher Poincaré residue maps, see \cite[4.18]{Kollar_Singularities_of_the_minimal_model_program}. Points \autoref{lem:min_lc_fixed_adjunction_singularities} and \autoref{lem:min_lc_fixed_adjunction_formula} are given by \cite[Theorem 4.19]{Kollar_Singularities_of_the_minimal_model_program} (if $Z$ is minimal, then by Theorem 4.19.3 the pair $(Z,\Diff^*_Z\Delta)$ has no non-trivial lc centers and thus is klt). So we are left to verify that the construction of $\Diff^*_Z\Delta$ is compatible with the action of $\bT$. If $\sigma$ is an automorphism of $X$, then from the construction in \cite[4.18]{Kollar_Singularities_of_the_minimal_model_program} it is straightforward to see that $\sigma_{*}\Diff^*_Z\Delta=\Diff^*_{\sigma(Z)}\sigma_*\Delta$. Therefore, as $\bT$ acts on $(X,\Delta)$, we obtain that $\bT$ acts on $(Z,\Diff^*_Z\Delta)$, as $t(Z)=Z$ for all $t\in\bT$ by \autoref{lem:min_lc_fixed_stable}.

    To prove \autoref{lem:min_lc_fixed_invariant}, we drop the dlt assumption, but instead assume that $(X,\Delta)$ is lc, $K$--trivial and that $Z$ is minimal. Let $g\colon(X^{\dlt},\Delta^{\dlt})\to(X,\Delta)$ be the $\bT$--equivariant dlt model provided by \autoref{lem:dlt_mod}. Let $S$ be an lc center of $(X^{\dlt},\Delta^{\dlt})$ that is minimal for inclusion among the lc centers dominating $Z$ (see \cite[Definition 4.15]{Kollar_Singularities_of_the_minimal_model_program} and the discussion thereafter). Then $S$ is a minimal lc center, because if by contradiction there exists an lc center $S'$ of $(X^{\dlt},\Delta^{\dlt})$ with $S'\subsetneq S$, then $g(S')$ is an lc center of $(X,\Delta)$ with $g(S')\subsetneq Z$, contradicting the minimality of $Z$. To ease the notation, denote $\Delta_S=\Diff^*_{S}\Delta^{\dlt}$. Then by \autoref{lem:min_lc_fixed_adjunction}, $(S,\Delta_S)$ is klt, $K$--trivial and $\bT\acts(X^{\dlt},\Delta^{\dlt})$ induces an action $\bT\acts (S,\Delta_S)$.
    
    Let us show that $\bT\acts S$ is trivial. By \cite[Theorem 1]{Sumihiro_Equivariant_completion}, there exists an embedding $S\hookrightarrow\bP^n$ and a linear action $\bT\acts\bP^n$ inducing the action $\bT\acts S$. In other words, there exists a polarization $\cL$ on $S$ such that $t^*\cL\cong\cL$ for all $t\in \bT$. But on the other hand, by \cite[Proposition 10.1]{Patakfalvi_Zdanowicz_On_the_Beauville-Bogomolov_decomposition_in_characteristic_p}, the abstract group
    \begin{align*}
        \Aut(S,\Delta_S;\cL)\coloneqq \{\sigma\in\Aut(S)(\bC)\mid \sigma_*\Delta_S=\Delta_S,\ \sigma^*\cL\cong\cL\}
    \end{align*}
    is finite. Therefore, $\bT/\ker(\bT\acts S)$ has to be finite, as its closed points inject into a finite set. As it is a torus, it must thus be trivial, i.e., $\bT=\ker(\bT\acts S)$. Finally, as $g\colon X^{\dlt}\to X$ is $\bT$--equivariant, the (co)restriction $g\colon S\to Z$ is $\bT$--equivariant as well. As it is also surjective, we obtain that $\bT\acts Z$ is trivial, and thereby conclude the proof.
\end{proof}

In particular, we obtain that minimal lc centers are in the fixed locus of any torus action. Together with \autoref{lem:dim_T_action}, this gives the following.

\begin{corollary}\label{cor:dim_bound_torus_center}
    Let $(X,\Delta)$ be a log Calabi--Yau pair, let $Z$ be a minimal lc center and $\bT\acts(X,\Delta)$ a torus action. Then we have $\dim Z\leq c(\bT\acts X)$, and in the case of equality, $Z$ is a top--dimensional component of $X^{\bT}$. In particular, if a torus $\bT\subseteq\Aut(X)$ preserves $\Delta$, then $\dim\bT\leq\codim Z$.
\end{corollary}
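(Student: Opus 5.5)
By \autoref{lem:min_lc_fixed}\autoref{lem:min_lc_fixed_invariant}, the minimal lc center $Z$ is $\bT$--invariant, because $(X,\Delta)$ is lc, projective and $K$--trivial. Hence $Z\subseteq X^{\bT}$ set-theoretically. More precisely, $Z$ with its reduced structure is a $\bT$--invariant closed subscheme, so by maximality of the fixed point scheme $Z$ is a closed subscheme of $X^{\bT}$. Since $Z$ is irreducible, this gives $\dim Z\leq\dim X^{\bT}$. \autoref{lem:dim_T_action} applies to the variety $X$ and gives $\dim X^{\bT}\leq c(\bT\acts X)$. Combining the two yields $\dim Z\leq c(\bT\acts X)$.

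Now suppose $\dim Z=c(\bT\acts X)$. Then $\dim Z=\dim X^{\bT}$, since both are squeezed between the same bounds. As $Z$ is an irreducible closed subset of $(X^{\bT})^{\red}$, it is contained in some irreducible component $W$ of $(X^{\bT})^{\red}$. Moreover $\dim W\leq\dim X^{\bT}=\dim Z$, so $Z=W$. Thus $Z$ is an irreducible component of $X^{\bT}$ of maximal dimension, i.e.\ a top--dimensional component.

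For the last assertion, let $\bT\subseteq\Aut(X)$ be a torus preserving $\Delta$. Its action on $X$ is then effective, so $\ker(\bT\acts X)$ is trivial and $c(\bT\acts X)=\dim X-\dim\bT$. The inequality $\dim Z\leq\dim X-\dim\bT$ rearranges to $\dim\bT\leq\codim_X Z$.

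There is no real obstacle, since all the content lies in \autoref{lem:min_lc_fixed} and \autoref{lem:dim_T_action}. The only point needing care is the passage from ``$\bT$--invariant'' to ``contained in $X^{\bT}$''. This follows directly from the definition of $X^{\bT}$ as the maximal $\bT$--invariant closed subscheme, and dimensions are unaffected by passing to the reduced structure.
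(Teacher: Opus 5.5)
Your proof is correct and takes the same route as the paper, which gives no separate proof and simply combines \autoref{lem:min_lc_fixed}.\autoref{lem:min_lc_fixed_invariant} (so $Z\subseteq X^{\bT}$) with \autoref{lem:dim_T_action}. Your write-up also supplies the details the paper leaves implicit: the factorization of $Z\hookrightarrow X$ through the fixed point scheme, the dimension count in the equality case, and effectiveness of $\bT\subseteq\Aut(X)$ for the final inequality.
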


\subsection{Albanese morphism of open varieties}\label{subsec:Albanese}

As the conclusion of \autoref{thmA} is on the one hand about the Albanese morphism and on the other hand of a birational nature, it will be useful to have a notion of Albanese morphism also for open varieties. A modern and much more refined and general construction can be found in \cite{Fujino_On_quasi-Albanese_maps, Schroer_Albanese_maps_for_open_algebraic_spaces}, but for us the construction in \cite{Serre_Morphismes_universels_et_variete_dAlbanese} will be sufficient.

\begin{definition}[\protect{\cite[Theorem 5]{Serre_Morphismes_universels_et_variete_dAlbanese}}]\label{def:Albanese}
    Let $U$ be a variety. Then there exists an abelian variety $\Alb_U$ and a morphism $\alb_U\colon U\to \Alb_U$ which is the universal map to an abelian variety from $U$. By virtue of being universal, for a morphism $f\colon U\to W$, there exists a unique morphism $\Alb(f)\colon \Alb_U\to\Alb_W$ such that $\alb_W\circ f=\Alb(f)\circ\alb_U$. In other words, $\Alb$ is a functor.
\end{definition}

\begin{remark}\label{rem:Albanese}
    Note that by \cite[Remark 9.5.25]{Fantechi_Goettsche_Illusie_Kleiman_Nitsure_Vistoli_Fundamental_algebraic_geometry}, if $U$ is normal and projective, then $\Alb_U\cong(\Pic^0_U)_{\red}^{\vee}$. 
\end{remark}

To compare the Albanese of an open variety with the Albanese of some compactification, we will need the following lemma, which shows that the Albanese of a variety can be computed as the Albanese of a big enough compactification. It is a vastly simplified version of the construction in \cite[Section 5]{Schroer_Albanese_maps_for_open_algebraic_spaces}.

\begin{lemma}\label{lem:alb_of_big_comp}
    Let $U$ be a variety and let $X$ be a compactification of $U$. Then there exists a compactification $Y$ of $U$ dominating $X$, such that for any compactification $Z$ of $U$ dominating $Y$, the inclusion $U\hookrightarrow Z$ induces an isomorphism $\Alb_U\cong\Alb_Z$.
\end{lemma}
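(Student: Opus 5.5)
We reduce everything to Serre's universal property together with the elementary fact that for a surjection of abelian varieties, the dimension only drops when the map is not an isomorphism up to isogeny.

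First, for any compactification $Z$ of $U$, functoriality of $\Alb$ applied to $U\hookrightarrow Z$ gives a morphism $\Alb_U\to\Alb_Z$. Since $U$ is dense in $Z$, $\alb_Z(U)$ generates $\Alb_Z$, so this map is surjective. If $Z'$ dominates $Z$, then $\Alb_U\to\Alb_Z$ factors as $\Alb_U\to\Alb_{Z'}\to\Alb_Z$. Hence $\dim\Alb_Z\le\dim\Alb_{Z'}\le\dim\Alb_U$, so the dimension is monotone under domination and bounded above. Compactifications dominating $X$ form a directed system: given two, take the closure of the diagonal image of $U$ in their product. We can therefore choose $Y$ dominating $X$ with $\dim\Alb_Y$ maximal. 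For any $Z$ dominating $Y$, the surjection $\Alb_Z\to\Alb_Y$ is then an isogeny.

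The substantive step is to show that, after possibly enlarging $Y$ further, the surjection $\Alb_U\to\Alb_Y$ is an isomorphism. The key construction is to take $Y$ dominating the closure of the graph of $\alb_U$. Concretely, let $Y$ be the closure of the image of $U$ in $X\times\Alb_U$ under $(\mathrm{incl},\alb_U)$; this is projective and dominates $X$. Then $\alb_U$ extends to a morphism $Y\to\Alb_U$, and more generally to $Z\to\Alb_U$ for every $Z$ dominating $Y$. By the universal property of $\Alb_Z$, this extension factors through $\Alb_Z$, giving $\Alb_Z\to\Alb_U$. Composing with $\Alb_U\to\Alb_Z$ and using uniqueness in Serre's universal property for $U$, the composite $\Alb_U\to\Alb_Z\to\Alb_U$ is the identity, since both send $\alb_U$ to itself. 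So $\Alb_U\to\Alb_Z$ is a closed immersion, and we already know it is surjective, hence it is an isomorphism. In this argument the maximality step is not needed, so the proof simplifies to this graph-closure construction.

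The main obstacle is bookkeeping: Serre's Albanese is defined up to a choice of base point, i.e.\ universal for morphisms to abelian varieties up to translation. One fixes a base point $u_0\in U$, uses it for $\Alb_U$ and for every $\Alb_Z$ simultaneously, and checks that the extended map $Z\to\Alb_U$ sends $u_0$ to $0$ so that uniqueness applies. One must also check that the closure $Y$ is indeed a compactification, i.e.\ that $U\to Y$ is an open immersion. This holds because $U$ maps isomorphically onto the graph of $\alb_U$, which is open in its closure since $U$ is open in $X$ and $Y\to X$ is an isomorphism over $U$.
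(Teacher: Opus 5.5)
Your proposal is correct and follows essentially the paper's argument. Your graph closure of $\alb_U$ in $X\times\Alb_U$ is exactly the paper's resolution of the indeterminacy of $X\dashrightarrow\Alb_U$, and your proof that $\Alb_U\to\Alb_Z\to\Alb_U$ is the identity is the paper's universal-property argument; you obtain the other direction from surjectivity instead of density of $U$ in $Z$, which is equivalent. The preliminary maximal-dimension step is, as you note yourself, unnecessary and can be dropped.
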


\begin{proof}
    We have a rational map $X\dashrightarrow \Alb_U$, so we can resolve the indeterminacy to obtain a compactification $Y$ of $U$, dominating $X$, and a map $Y\to\Alb_U$. Hence, if $Z$ is any compactification of $U$ dominating $Y$, we obtain maps $\Alb_Z\to \Alb_U$ and $\Alb_U\to \Alb_Z$, and by the universal property of the Albanese, they must be mutually inverse.
\end{proof}

As the next lemma shows, for smooth varieties, taking a smooth compactification is already sufficient, essentially because the Albanese is a birational invariant of smooth projective varieties. We will later improve this to the setting of dlt singularities in \autoref{lem:alb_iso_dlt}.

\begin{lemma}\label{lem:alb_iso_smooth}
    Let $X$ be a smooth projective variety and let $i\colon U\hookrightarrow X$ be an open immersion. Then $\Alb(i)\colon\Alb_U\to\Alb_X$ is an isomorphism.
\end{lemma}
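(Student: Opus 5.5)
The plan is to use \autoref{lem:alb_of_big_comp} to reduce to a birational-invariance statement for Albanese varieties of smooth projective varieties. By \autoref{lem:alb_of_big_comp} (applied with the compactification $i\colon U\hookrightarrow X$), there is a compactification $Y$ of $U$ dominating $X$ such that every compactification dominating $Y$ computes $\Alb_U$. By resolution of singularities, which is an isomorphism over the smooth locus and hence over $U$, we replace $Y$ by a smooth projective $Z$ dominating $Y$ and containing $U$ as an open subset. Then $\Alb_U\cong \Alb_Z$ via the inclusion $U\hookrightarrow Z$. Moreover, $Z$ dominates $X$ through a proper birational morphism $f\colon Z\to X$ which restricts to the identity on $U$.

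By functoriality, $\Alb(i)$ factors as
\[
\Alb_U\xrightarrow{\ \sim\ }\Alb_Z\xrightarrow{\ \Alb(f)\ }\Alb_X .
\]
So it suffices to show that $\Alb(f)$ is an isomorphism for a birational morphism $f\colon Z\to X$ of smooth projective varieties.

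This last step is the classical birational invariance of the Albanese, which I expect to be the only real content. Let $g\colon X\dashrightarrow Z$ be the inverse rational map. Since $X$ is smooth and an abelian variety contains no rational curves, the composite rational map $\alb_Z\circ g\colon X\dashrightarrow \Alb_Z$ extends to a morphism on all of $X$: it is defined off codimension $\geq 2$, and the standard argument (resolve by blow-ups; each exceptional fiber is rationally connected, hence contracted) gives the extension.

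By the universal property of $\alb_X$, this morphism induces a homomorphism $\Alb_X\to\Alb_Z$. Both composites with $\Alb(f)$ agree with the identity after precomposing with the Albanese maps on a dense open set. Since the Albanese maps generate their targets and the morphisms are uniquely determined by the universal property, the two maps are mutually inverse.

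Alternatively, one can argue via \autoref{rem:Albanese}: $f^*\colon \Pic^0_X\to\Pic^0_Z$ is an isomorphism for birational morphisms of smooth projective varieties. Indeed, $f_*\cO_Z=\cO_X$ and $R^1f_*\cO_Z=0$ give $H^1(X,\cO_X)\cong H^1(Z,\cO_Z)$, and $f^*$ is injective on $\Pic$. Dualizing then gives the result.
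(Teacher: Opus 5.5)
Your proposal is correct and follows essentially the same route as the paper. You use \autoref{lem:alb_of_big_comp} to get a compactification computing $\Alb_U$, replace it by a resolution $Z$ that is an isomorphism over $U$, and factor $\Alb(i)$ as $\Alb(f)\circ(\Alb_U\xrightarrow{\sim}\Alb_Z)$, where $\Alb(f)$ is an isomorphism by birational invariance of the Albanese for smooth projective varieties. The only difference is that the paper just cites this birational invariance, whereas you also sketch two valid proofs of it (extending rational maps to abelian varieties, or comparing $\Pic^0$ and dualizing).
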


\begin{proof}
    By \autoref{lem:alb_of_big_comp} there exists a compactification $j\colon U\to Y$ of $U$ such that $\Alb(j)$ is an isomorphism and $Y$ dominates $X$ via some map $f\colon Y\to X$. Furthermore, by passing to a resolution of singularities of $Y$ which is an isomorphism over $U$, we may assume that $Y$ is smooth. As the Albanese is a birational invariant for smooth projective varieties, $\Alb(f)$ is an isomorphism. Hence $\Alb(i)=\Alb(f)\circ\Alb(j)$ is an isomorphism as well.
\end{proof}

An elementary but useful property of the Albanese variety is that it is trivial for rationally chain connected varieties, as there are no rational curves on abelian varieties. This in turn implies that a morphism whose fibers are rationally chain connected induces an isomorphism of Albanese varieties, see \cite[Lemma 2.10]{Bernasconi_Filipazzi_Patakfalvi_Tsakanikas_A_counterexample_to_the_log_canonical_Beauville-Bogomolov_decomposition}. For our purposes, we need a version of this for open varieties. As by \autoref{lem:alb_iso_smooth} the Albanese of open subsets of $\bP^n$ is trivial, we will use the following notion of rational chain connectedness of open varieties.

\begin{definition}\label{def:quasi-rcc}
    A variety $U$ is said to be quasi-rcc if for any two general points $u,u'$, there exists a sequence of non-constant morphisms $C_1\to U,\ldots,C_N\to U$ such that $C_i$ is an open subset of $\bP^1$, $u$ is in the image of $C_1$, $u'$ is in the image of $C_N$ and the images of $C_i$ and $C_{i+1}$ have non-empty intersection for all $i$.
\end{definition}

\begin{remark}\label{rem:quasi-rcc}
    The main technical difference of \autoref{def:quasi-rcc} and the classical notion of rational chain connectedness as in, e.g., \cite[Definition IV.3.2]{Kollar_Rational_curves_on_algebraic_varieties}, is that we don't require two general points to be connected by a chain of \emph{proper} rational curves, but merely by a chain of open subsets of those. Therefore, e.g., $\bA^n$ is quasi--rcc, or more generally any open subset of a rationally chain connected variety. It is therefore a rather crude notion, but as we are using also a crude notion for the Albanese morphism of an open variety, it is sufficient for us.
\end{remark}

\begin{lemma}[\protect{\cite[Lemma 2.10]{Bernasconi_Filipazzi_Patakfalvi_Tsakanikas_A_counterexample_to_the_log_canonical_Beauville-Bogomolov_decomposition}}]\label{lem:alb_iso_rcc}
    Let $f\colon U\to W$ be a surjective morphism whose fibers are quasi-rcc. If one of the following holds, then $\Alb(f)$ is an isomorphism:
    \begin{enumerate}
        \item\label{lem:alb_iso_rcc_proper} $f$ is proper and $W$ is normal;
        \item\label{lem:alb_iso_rcc_section} $f$ admits a section.
    \end{enumerate}
\end{lemma}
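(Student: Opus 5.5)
The plan is to show that $\Alb(f)$ is surjective with trivial kernel by relating points of $\Alb_U$ to points of $\Alb_W$. Write $a_U=\alb_U$. Since $f$ is surjective, $\Alb_U\to\Alb_W$ is surjective: its image is an abelian subvariety containing the image of $\alb_W\circ f$, and that image generates $\Alb_W$. So the whole problem is injectivity. Equivalently, I would show that $a_U$ factors through $f$ via a morphism $g\colon W\to\Alb_U$. Then the universal property gives $\Alb_W\to\Alb_U$, and the two maps are mutually inverse.

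The key step is that $a_U$ is constant on every fiber $U_w$. First, any morphism from an open subset $C\subseteq\bP^1$ to an abelian variety $A$ is constant. It extends to $\bP^1$ because $A$ is proper and $\bP^1$ is a smooth curve, and $\bP^1$ has no nonconstant maps to $A$ (equivalently, $\Alb_C$ is trivial by \autoref{lem:alb_iso_smooth}). Hence $a_U$ is constant on the image of each $C_i\to U_w$. By the chain condition in \autoref{def:quasi-rcc}, two general points of $U_w$ then have the same image. So $a_U$ is constant on a dense subset of each irreducible component of $U_w$, and hence on that component. The fiber-connectivity hypothesis implicit in ``quasi-rcc'' is meant for an integral fiber; if fibers may be reducible or nonreduced, I would apply the argument to general fibers only and handle the rest in the descent step below.

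Next I descend $a_U$ to $W$.

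\textbf{Section case.} Let $s$ be a section and put $g=a_U\circ s$. The maps $a_U$ and $g\circ f$ agree on a dense open subset of $U$: for general $w$, the point $s(w)$ and a general point of $U_w$ lie in the same connected fiber on which $a_U$ is constant. If needed, I would shrink so that $s(w)$ lies in the relevant component. Since $U$ is reduced and $\Alb_U$ is separated, $a_U=g\circ f$ everywhere.

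\textbf{Proper case.} Here I would use a rigidity-type argument. Look at the image $\Gamma$ of $(f,a_U)\colon U\to W\times\Alb_U$. This map is proper because $f$ is, so $\Gamma$ is closed. Since $a_U$ is constant on general fibers, $\Gamma\to W$ is proper and birational, with finite fibers over a dense open set. The obstacle is showing that $\Gamma\to W$ is quasi-finite everywhere, i.e.\ that $a_U$ is constant on \emph{every} fiber, including special ones. I would argue via upper semicontinuity of fiber dimension: if some fiber of $\Gamma\to W$ were positive dimensional, its preimage would be a connected component of $U_w$, using Zariski connectedness and $f_*\cO_U=\cO_W$ from normality plus Stein factorization. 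Its image in $\Alb_U$ would then be a limit of points, hence a point, via a rigidity lemma. Alternatively, invoke quasi-rcc of that fiber directly, since the hypothesis applies to all fibers. Once $\Gamma\to W$ is finite and birational onto the normal variety $W$, Zariski's Main Theorem gives $\Gamma\cong W$, and composing $W\cong\Gamma\to\Alb_U$ yields $g$. I expect this proper-case descent, rather than the constancy argument, to be the main point needing care.
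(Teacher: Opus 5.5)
Your proposal is correct. The section case is the paper's argument. The paper checks $\alb_U=\alb_U\circ s\circ f$ directly on all closed points, using that every fiber is quasi-rcc, and concludes by functoriality of $\Alb$ instead of the universal property; the two are equivalent.

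In the proper case you take a different route.

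\textbf{The paper's route.} It first derives $f_*\cO_U=\cO_W$ from connectedness of the fibers via Stein factorization and Zariski's Main Theorem. It then invokes its own rigidity lemma, \autoref{lem:rigidity_magic}, which builds $h\colon W\to\Alb_U$ as a morphism of locally ringed spaces from $f$ being a topological quotient with $f^{\#}$ an isomorphism.

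\textbf{Your route.} You take the image $\Gamma$ of $(f,\alb_U)\colon U\to W\times\Alb_U$, which is closed because $f$ is proper. Once $\alb_U$ is constant on every fiber, $\Gamma\to W$ is proper and bijective, hence finite and, in characteristic $0$, birational. Since $\Gamma$ is integral and $W$ is normal, Zariski's Main Theorem gives $\Gamma\cong W$.

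\textbf{Comparison.} Your argument is valid and shorter. It is essentially Stein factorization applied directly to $(f,\alb_U)$, and it needs neither $f_*\cO_U=\cO_W$ nor a separate rigidity lemma. What the paper's route buys is that properness enters only through the hypotheses of \autoref{lem:rigidity_magic} (topological quotient and $f_*\cO=\cO$). That is the shape one would want for the non-proper version discussed in \autoref{rem:alb_iso_rcc}. Your argument uses properness of $f$ essentially, to make $\Gamma$ closed and $\Gamma\to W$ finite.

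\textbf{Caveat on special fibers.} Of your two options, only the second one, applying the quasi-rcc hypothesis to every fiber, is valid. It is exactly what the paper uses, as the third hypothesis of \autoref{lem:rigidity_magic}. The first option, upper semicontinuity plus ``a limit of points is a point'', fails. Rigidity propagates contraction from one fiber to nearby fibers, not from general fibers to special ones.

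For example, let $f\colon U\to W$ be the minimal resolution of the affine cone over an elliptic curve $E$. Then $W$ is normal and $f_*\cO_U=\cO_W$. However, $\alb_U\colon U\to E$, the line bundle projection, contracts every fiber except the exceptional curve. Indeed $\Alb_U\cong E$, while $\Alb_W$ is a point.

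So drop that option, and also drop the idea of treating non-general fibers ``in the descent step''. Those fibers must be handled by the hypothesis on every fiber.
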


\begin{proof}
    By \autoref{lem:alb_iso_smooth}, the Albanese variety of any open subset of $\bP^1$ is trivial. Therefore, the Albanese of a quasi-rcc variety must be a point, as every pair of general points $u,u'$ must then have same image in the Albanese. As the fibers of $f$ are quasi-rcc by hypothesis, $\alb_U$ contracts these fibers.

    Assume first that \autoref{lem:alb_iso_rcc_proper} holds. As the fibers of $f$ are quasi-rcc, they are in particular connected, and thus $f_*\cO_U=\cO_W$ (by using the Stein factorization and Zariski's main theorem). Then the hypotheses of \autoref{lem:rigidity_magic} are satisfied, so that we obtain a morphism $h\colon W\to\Alb_U$ with $\alb_U=h\circ f$. But then $(W,h)$ satisfies the universal property of the Albanese of $W$, and thus $\Alb(f)$ must be an isomorphism.

    Assume now that \autoref{lem:alb_iso_rcc_section} holds, so let $s\colon W\to U$ be a section of $f$. We then claim that $\alb_U\circ s\circ f=\alb_U$. Indeed, it suffices to check this on closed points. If $u\in U(\bC)$ is arbitrary, then $u$ and $s(f(u))$ are in the same fiber of $f$. They are hence mapped to the same point under $\alb_U$, which proves the claim. Therefore we obtain also 
    \begin{align*}
        \Alb({\alb_U})\circ\Alb(s)\circ\Alb(f)=\Alb({\alb_U})\expl{\implies}{$\Alb({\alb_U})$ is an isomorphism} \Alb(s)\circ\Alb(f)=\id_{\Alb_U},
    \end{align*}
    by functoriality. As $s$ is a section of $f$ we also have $\Alb(f)\circ\Alb(s)=\id_{\Alb_W}$, and so $\Alb(f)$ is indeed an isomorphism.
\end{proof}

To factor the Albanese morphism in case \autoref{lem:alb_iso_rcc_proper} above, we need a version of the rigidity lemma as in \cite[Lemma 1.15(b)]{Debarre_Higher-dimensional_algebraic_geometry} without the properness assumption on the morphism contracting the fibers (which for us is $\alb_U$). Because there isn't a significant increase in complexity, we prove the following slightly more general version, inspired by \cite[Proposition 4.1]{Conrad_rigidity}.
\begin{lemma}\label{lem:rigidity_magic}
    Let $X,Y,Z$ be varieties, and let $f\colon X\to Y$ and $g\colon X\to Z$ be morphisms such that
    \begin{enumerate}
        \item\label{lem:rigidity_magic_top_quot} $f$ is a topological quotient map (e.g., it is proper and surjective)
        \item\label{lem:rigidity_magic_fibration} $f_{*}\cO_X=\cO_Z$
        \item\label{lem:rigidity_magic_contracts_fibers} for all $x,x'\in X(\bC)$ with $f(x)=f(x')$, we have $g(x)=g(x')$.
    \end{enumerate}
    Then there exists a morphism $h\colon Y\to Z$ such that $g=h\circ f$.
\end{lemma}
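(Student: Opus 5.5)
The plan is to build $h$ locally on $Y$: over each affine open $V\subseteq Z$, define $h$ through the ring map $\cO_Z(V)\to\cO_X(g^{-1}(V))$ together with hypothesis \autoref{lem:rigidity_magic_fibration}, and then glue. I read hypothesis \autoref{lem:rigidity_magic_fibration} as $f_*\cO_X=\cO_Y$, which I take to be the intended meaning. I also assume $f$ is surjective, as in the example given in \autoref{lem:rigidity_magic_top_quot}. The only genuinely topological point is the following: hypothesis \autoref{lem:rigidity_magic_contracts_fibers} is stated only for closed points, but we need $g$-preimages of opens to be saturated for $f$ as sets of \emph{all} scheme points.

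\textbf{Step 1: saturation.} Let $V\subseteq Z$ be open and set $W\coloneqq g^{-1}(V)$, an open subset of $X$. I claim $f^{-1}(f(W))=W$.
\begin{itemize}
\item By Chevalley's theorem, $f(W)$ is constructible. Hence $S\coloneqq f^{-1}(f(W))\setminus W$ is constructible in $X$.
\item If $S\neq\emptyset$, it contains a closed point $x'$, since $X$ is a variety and hence Jacobson.
\item Then $f(x')\in f(W)$, and $f(x')$ is closed because $f$ is a morphism of varieties. So $f^{-1}(f(x'))\cap W$ is a non-empty locally closed set and contains a closed point $x$.
\item By hypothesis \autoref{lem:rigidity_magic_contracts_fibers}, $g(x')=g(x)\in V$, so $x'\in W$. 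This contradicts $x'\in S$.
\end{itemize}
Consequently, $U_V\coloneqq f(W)$ is open in $Y$, because $f$ is a topological quotient and $f^{-1}(U_V)=W$ is open. Since $f$ is surjective and the opens $V$ cover $Z$, the sets $U_V$ cover $Y$. Moreover, $U_V\cap U_{V'}=U_{V\cap V'}$, by saturation and surjectivity.

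\textbf{Step 2: local definition.} Now take $V$ affine. By hypothesis \autoref{lem:rigidity_magic_fibration}, $\cO_Y(U_V)=\cO_X(f^{-1}(U_V))=\cO_X(g^{-1}(V))$. Composing with $g^\#\colon\cO_Z(V)\to\cO_X(g^{-1}(V))$ gives a ring map $\cO_Z(V)\to\cO_Y(U_V)$. Since $V$ is affine, this corresponds to a unique morphism $h_V\colon U_V\to V$.

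The identity $h_V\circ f=g$ on $f^{-1}(U_V)$ holds because both sides correspond to the same ring map $\cO_Z(V)\to\cO_X(g^{-1}(V))$. That ring map is $g^\#$ in both cases, since $f^\#\colon\cO_Y(U_V)\to\cO_X(f^{-1}U_V)$ is the identification used above.

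\textbf{Step 3: gluing.} Let $V,V'$ be affine and cover $V\cap V'$ by affines $V''$. On each $U_{V''}$, both $h_V$ and $h_{V'}$ restrict to morphisms $U_{V''}\to V''$. Indeed, $h_V(U_{V''})\subseteq V''$ because $h_V\circ f=g$, $f$ is surjective, and $f^{-1}(U_{V''})=g^{-1}(V'')$.

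Both restrictions then correspond to the ring map $\cO_Z(V'')\to\cO_Y(U_{V''})$ obtained from $g^\#$. This uses the injectivity, in fact bijectivity, of $f^\#$ on sections, which follows from $f_*\cO_X=\cO_Y$. By uniqueness of morphisms into an affine scheme, the two restrictions coincide. Hence the $h_V$ glue to $h\colon Y\to Z$ with $h\circ f=g$.

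\textbf{Main obstacle.} I expect the main obstacle to be Step 1, namely passing from the closed-point hypothesis to saturation at all scheme points; it is handled by the Chevalley and Jacobson argument. The rest is formal.
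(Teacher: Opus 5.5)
Your proof is correct, and it takes a shorter route than the paper. The key input is shared: the Chevalley--Jacobson argument that upgrades hypothesis (3) from closed points to saturation. Your Step 1 is exactly the paper's Claim 3, except that the paper states it only for opens of the form $Z\setminus\overline{\{g(\eta)\}}$, while you prove it for every open $V\subseteq Z$.

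The paper then combines this with a density statement for images of closed points (its Claim 2) to show that $g$ is constant on all scheme-theoretic fibers of $f$ (Claim 1). It defines $h$ pointwise as a continuous map via the quotient topology, sets $h^{\#}=h_*f^{\#,-1}\circ g^{\#}$, and finally checks by hand that the stalk maps are local.

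You instead note that saturation for every open already gives an open cover $U_V=f(g^{-1}(V))$ of $Y$ with $f^{-1}(U_V)=g^{-1}(V)$. Since $Z$ is $T_0$, this saturation also implies the paper's Claim 1 at once. You then build $h$ on each $U_V$ from the universal property of the affine scheme $V$, so being a morphism of locally ringed spaces comes for free. The only extra work is the gluing, which you handle correctly using injectivity of $f^{\#}$ on sections.

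The paper's version gives a global, chart-free formula for $h^{\#}$. Yours dispenses with Claims 1 and 2 and with the locality verification. Your reading of hypothesis (2) as $f_*\cO_X=\cO_Y$ matches how the paper's proof actually uses it: $f^{\#}$ is taken to be an isomorphism.
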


\begin{proof}
    Firstly, we construct $h$ topologically. To this end, we verify that $g$ is constant on the set-theoretic fibers of $f$.\medskip
    
    \noindent\textbf{Claim 1:} \textit{If $\eta,\eta'\in X$ are scheme points with $f(\eta)=f(\eta')$, then $g(\eta)=g(\eta')$.}\medskip

    \noindent\textit{Proof of Claim 1:} As irreducible subsets admit a unique generic point, it suffices to show that $\overline{\{g(\eta)\}}=\overline{\{g(\eta')\}}$. By Claim 2 and by symmetry, it suffices to show that $\overline{\{g(\eta)\}}$ contains all images under $g$ of closed points in $\overline{\{\eta'\}}$. So suppose by contradiction that there exists $x_0'\in \overline{\{\eta'\}}\cap X(\bC)$ with
    \begin{align*}
        g(x_0')\in \underbrace{Y\setminus \overline{\{g(\eta)\}}}_{V\coloneqq}.
    \end{align*}
    Then by Claim 3, $W\coloneqq f(g^{-1}(V))$ is open in $Y$. Notice that by assumption, we have $f(x_0')\in W$. Now as $f(\eta)=f(\eta')$, we obtain by Claim 2 that $f(x_0)\in W$ for some $x_0\in \overline{\{\eta\}}\cap X(\bC)$. But then as $W=f(g^{-1}(V))$, there exists a closed point $x_0''\in g^{-1}(V)$ such that $f(x_0)=f(x_0'')$, and thus $g(x_0)=g(x_0'')\in V$ by \autoref{lem:rigidity_magic_contracts_fibers}. But as $g(x_0)\in \overline{\{g(\eta)\}}$ by continuity of $g$, this directly contradicts the way we defined $V$, so we are done.\qedclaim\medskip

    \noindent\textbf{Claim 2:} \textit{Let $F\colon S\to T$ be a morphism of varieties, and let $\xi\in S$ be a scheme point. Then $F\left(\overline{\{\xi\}}\cap S(\bC)\right)$ is dense in $\overline{\{F(\xi)\}}$.}\medskip
    
    \noindent\textit{Proof of Claim 2:} This directly follows from Chevalley's theorem and the fact that over an algebraically closed field, closed points are dense in any constructible set.\qedclaim\medskip
    
    \noindent\textbf{Claim 3:} \textit{Let $\eta$ be a scheme point of $X$. Then $f\left(g^{-1}\left(Y\setminus\overline{\{g(\eta)\}}\right)\right)$ is an open subset of $Y$.}\medskip
    
    \noindent\textit{Proof of Claim 3:}
    Let us start by showing that $g^{-1}(V)$ is saturated for $f$, i.e. that $g^{-1}(V)=f^{-1}(f(g^{-1}(V)))$. As both sets are constructible by Chevalley's theorem and constructible subsets of varieties over an algebraically closed field are determined by their closed points, it suffices to show that they contain the same closed points. So let $x\in f^{-1}(f(g^{-1}(V)))$ be a closed point. Then $f^{-1}(f(x))\cap g^{-1}(V)$ is a non--empty constructible set, so it contains a closed point $x'$. As then $f(x)=f(x')$, we obtain by \autoref{lem:rigidity_magic_contracts_fibers} that
    \begin{align*}
        g(x)=g(x')\in V,
    \end{align*}
    and thus $x\in g^{-1}(V)$. Hence we conclude that indeed $g^{-1}(V)=f^{-1}(f(g^{-1}(V)))$. Finally, as $f$ is a topological quotient map by \autoref{lem:rigidity_magic_top_quot}, the image under $f$ of a saturated open set is open in $Y$.\qedclaim\medskip

    Therefore, we may define a set-theoretic map $h\colon Y\to Z$ by imposing that $h(\theta)=g(\eta_\theta)$, where we choose $\eta_\theta\in f^{-1}(\theta)$ arbitrarily ($f$ is surjective by \autoref{lem:rigidity_magic_top_quot}). As $f$ is a topological quotient map, $h$ is in fact continuous, and by construction we have $g=h\circ f$.

    We are left to construct a map $h^{\#}\colon \cO_Z\to h_*\cO_Y$ making $(h,h^{\#})$ into a morphism of locally ringed spaces. As $f^{\#}$ is an isomorphism by \autoref{lem:rigidity_magic_fibration} and $g_*=h_*\circ f_*$ as functors, we may define
    \begin{align*}
        h^{\#}\coloneqq h_*f^{\#,-1}\circ g^{\#}.
    \end{align*}
    Then by construction, $(h,h^{\#})$ is a morphism of ringed spaces with $(g,g^{\#})=(h,h^{\#})\circ(f,f^{\#})$. It remains to see that $(h,h^{\#})$ is in fact a morphism of locally ringed spaces. This follows formally from the surjectivity of $f$ and the fact that $(g,g^{\#})$ and $(f,f^{\#})$ are morphisms of locally ringed spaces: let $\eta\in X$ be arbitrary, then we have the diagram
    %
    % https://q.uiver.app/#q=WzAsMyxbMCwwLCJcXHNPX3tZLGYoeCl9Il0sWzEsMCwiXFxzT197WixnKHgpfSJdLFsyLDAsIlxcc09fe1gseH0iXSxbMCwxLCJoX3tnKHgpfSJdLFsxLDIsImdfeCJdLFswLDIsImZfeCIsMCx7ImN1cnZlIjotM31dXQ==
\[\begin{tikzcd}
	{\cO_{Z,g(\eta)}} & {\cO_{Y,f(\eta)}} & {\cO_{X,\eta}}
	\arrow["{h_{f(\eta)}}", from=1-1, to=1-2]
	\arrow["{g_\eta}", bend left=30, from=1-1, to=1-3]
	\arrow["{f_\eta}", from=1-2, to=1-3]
\end{tikzcd}\]
    where both $f_\eta$ and $g_\eta$ are local. Therefore, we have
    \begin{align*}
        h_{f(\eta)}^{-1}(\fm_{Y,f(\eta)})=h_{f(\eta)}^{-1}(f_\eta^{-1}(\fm_{X,\eta}))=g_\eta^{-1}(\fm_{X,\eta})=\fm_{Z,g(\eta)}.
    \end{align*}
    As $f$ is surjective, we hence conclude that $h_\theta$ is local for every $\theta\in Y$. This concludes the proof.
\end{proof}

\begin{remark}\label{rem:alb_iso_rcc}
    Conceptually, and in view of the lack of properness assumptions in \autoref{lem:rigidity_magic}, an optimal version of \autoref{lem:alb_iso_rcc} would be that if $f\colon U\to W$ is surjective with quasi-rcc fibers and $W$ is normal, then $\Alb(f)$ is an isomorphism. However, we encountered technical difficulties when attempting to prove this, essentially because of the properness assumptions in Zariski's main theorem and Stein factorization. As the above version of \autoref{lem:alb_iso_rcc} is enough for our purposes, we didn't pursue this further.
\end{remark}

Finally, we generalize \autoref{lem:alb_iso_smooth} by showing that the same conclusion holds under the weaker hypothesis of having dlt singularities. 

\begin{lemma}\label{lem:alb_iso_dlt}
    Let $(X,\Delta)$ be a projective dlt pair and let $i\colon U\hookrightarrow X$ be an open immersion. Then $\Alb(i)$ is an isomorphism.
\end{lemma}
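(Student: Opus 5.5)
Our plan is to reduce to \autoref{lem:alb_iso_smooth} via a log resolution, using that dlt singularities are rational, so the fibers of a resolution are rationally chain connected and the Albanese does not change under the resolution. Since $X$ is projective, $U$ is a variety with compactification $i$. By \autoref{lem:alb_of_big_comp} there is a compactification $j\colon U\to Y$ dominating $X$ via some $f\colon Y\to X$ (so $f\circ j=i$) such that $\Alb(j')$ is an isomorphism for every compactification $j'$ of $U$ dominating $Y$. Taking a resolution of singularities of $Y$ that is an isomorphism over $U^{\sm}$ only is not enough, since $U$ need not be smooth. So we instead first pass to $U^{\sm}$ and compare the four Albaneses $\Alb_{U^{\sm}}$, $\Alb_U$, $\Alb_{X}$, $\Alb_{\tX}$.

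Concretely, we will show that both $\Alb_{U^{\sm}}\to\Alb_U$ and $\Alb_{U^{\sm}}\to\Alb_X$ are isomorphisms. Then $\Alb(i)$ is an isomorphism by functoriality, since $U^{\sm}\hookrightarrow U\hookrightarrow X$ composes to the inclusion $U^{\sm}\hookrightarrow X$.

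\textbf{Step 1: $\Alb_{\tX}\to\Alb_X$ is an isomorphism.} Let $\pi\colon\tX\to X$ be a log resolution that is an isomorphism over the snc locus of $(X,\Delta)$, which is possible by \cite[Theorem 10.45]{Kollar_Singularities_of_the_minimal_model_program}. Since $(X,\Delta)$ is dlt, $X$ has klt, hence rational, singularities. By the theorem of Hacon--McKernan on rational chain connectedness of fibers of resolutions of klt (or dlt) pairs, the fibers of $\pi$ are rationally chain connected, and in particular quasi-rcc. As $X$ is normal and $\pi$ is proper, \autoref{lem:alb_iso_rcc}\autoref{lem:alb_iso_rcc_proper} gives that $\Alb(\pi)$ is an isomorphism. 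Alternatively, one can argue via \autoref{rem:Albanese}: rationality gives $\Pic^0_X\cong\Pic^0_{\tX}$ by $R^1\pi_*\cO_{\tX}=0$, and the Albanese is dual to the reduced $\Pic^0$.

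\textbf{Step 2: $\Alb_{U^{\sm}}\to\Alb_X$ is an isomorphism.} The open set $U^{\sm}$ embeds into $\tX$, since we may choose $\pi$ to be an isomorphism over $X^{\sm}$. Then \autoref{lem:alb_iso_smooth} gives $\Alb_{U^{\sm}}\cong\Alb_{\tX}$, and composing with Step 1 yields $\Alb_{U^{\sm}}\cong\Alb_X$.

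\textbf{Step 3: $\Alb_{U^{\sm}}\to\Alb_U$ is an isomorphism.} This is the main obstacle. We will take a resolution $\pi_U\colon\tU\to U$ that is an isomorphism over $U^{\sm}$; concretely, $\tU=\pi^{-1}(U)$ with $\pi_U=\pi|_{\tU}$. Its fibers are fibers of $\pi$, hence quasi-rcc, and $\pi_U$ is proper and surjective onto the normal variety $U$. So \autoref{lem:alb_iso_rcc}\autoref{lem:alb_iso_rcc_proper} gives $\Alb_{\tU}\cong\Alb_U$. Moreover, $\tU$ is a smooth open subset of the smooth projective variety $\tX$, so by \autoref{lem:alb_iso_smooth} we get $\Alb_{U^{\sm}}\cong\Alb_{\tX}\cong\Alb_{\tU}$. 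Combining these, $\Alb_{U^{\sm}}\to\Alb_U$ is an isomorphism.

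The delicate input is the rational chain connectedness of the fibers of $\pi$ over the non-klt-but-dlt locus. For a dlt pair, $X$ itself is klt after perturbing the boundary: the pair $(X,(1-\epsilon)\Delta)$ is klt for small $\epsilon>0$ when $X$ is $\bQ$-factorial, or in general by \cite[Proposition 2.43]{Kollar_Mori_Birational_geometry_of_algebraic_varieties}. Hacon--McKernan's result then applies to this klt pair, and this is the step we expect to need the most care.
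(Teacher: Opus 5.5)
Your argument is correct and is essentially the paper's proof. The paper takes a log resolution $g\colon Y\to X$, gets rationally chain connected fibers from Hacon--McKernan, and combines \autoref{lem:alb_iso_rcc} for $g$ and $g|_{g^{-1}(U)}$ with \autoref{lem:alb_iso_smooth} for $g^{-1}(U)\subseteq Y$, which is exactly the content of your Steps~1 and~3. The detour through $U^{\sm}$ is unnecessary, because $i\circ\pi|_{\pi^{-1}(U)}=\pi\circ j$ with $j\colon\pi^{-1}(U)\hookrightarrow\tX$, and $\Alb(\pi)$, $\Alb(\pi|_{\pi^{-1}(U)})$ and $\Alb(j)$ are isomorphisms, so $\Alb(i)$ is already an isomorphism. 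It also causes a small inconsistency: Step~2 assumes your log resolution $\pi$ is an isomorphism over all of $X^{\sm}$, which fails when $\Delta$ is not snc there, although a plain resolution of $X$ would serve equally well.
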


\begin{proof}
    Let $g\colon (Y,\Gamma)\to (X,\Delta)$ be a log resolution. By \cite[Corollary 1.6]{Hacon_Mckernan_On_Shokurovs_rational_connectedness_conjecture}, the fibers of $g$ are rationally chain connected. Let us denote $U'=g^{-1}(U)$ and $j\colon U'\to Y$. By \autoref{lem:alb_iso_smooth}, we have that $\Alb(j)$ is an isomorphism. On the other hand, by \autoref{lem:alb_iso_rcc}, both $\Alb(g)$ and $\Alb({g|_{U'}})$ are isomorphisms. Hence $\Alb(i)$ is an isomorphism as well.
\end{proof}

\subsection{The Bia\l ynicki--Birula decomposition}\label{subsec:Birula}

We will now use the Bia\l ynicki--Birula decomposition for torus actions, as constructed in \cite{Jelisiejew_Sienkiewicz_Bialynicki-Birula_decomposition_for_reductive_groups}, see \autoref{subsec:Albanese_strategy}. For a pair $(X,\Delta)$ satisfying the hypotheses of \autoref{thmA}, we thereby obtain an open subset of $X$ which is an affine fiber bundle over an open subset of $Z$. To make sure that $Z$ lies in an open cell of the decomposition, we need to construct a partial compactification $\overline{\bT}$ of $\bT$ so that for some closed point $z\in Z$, the action of $\bT$ on the cotangent space $T^{\vee}_z(X)$ extends to a $\overline{\bT}$--action. We do this using \autoref{lem:extend_lin_t_action}, but to apply it, the tangent space has to be of the right dimension. That is, we need a point $z\in X^{\sm}\cap Z$. Hence, we need to suppose that $(X,\Delta)$ is dlt, so that such a point exists by \autoref{rem:dlt_smooth_locus}.

\begin{lemma}\label{lem:BB-decomp}
    Let $(X,\Delta)$ be a dlt log Calabi--Yau pair and suppose there is a torus action $\bT\acts(X,\Delta)$ and a minimal lc center $Z$ with $\dim Z=c(\bT\acts X)$. Then for every $z\in X^{\sm}\cap Z$, there exists an open neighborhood $U\subseteq X^{\sm}$ containing $z$ and a diagram of the form
    %
    % https://q.uiver.app/#q=WzAsMyxbMCwxLCJcXHRVIl0sWzAsMCwiVSJdLFsxLDEsIlxcdFVcXHRpbWVzXFxiQV5lIl0sWzEsMiwiXFxjb25nIl0sWzEsMF0sWzAsMiwieFxcbWFwc3RvICh4LDApIiwyXSxbMCwxLCJcXHN1YnNldGVxIiwxLHsib2Zmc2V0IjotMywiY3VydmUiOi0yfV1d
\[\begin{tikzcd}
	U \\
	\tU & {\tU\times\bA^e}
	\arrow[from=1-1, to=2-1]
	\arrow["\cong", from=1-1, to=2-2]
	\arrow["\subseteq"{description}, shift left=3, bend left = 30, from=2-1, to=1-1]
	\arrow["{u\mapsto (u,0)}"', from=2-1, to=2-2]
\end{tikzcd}\]
    where $\tU=U\cap Z$ and $e=\codim Z$.
\end{lemma}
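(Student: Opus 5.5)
Here is the plan. First reduce to an effective action, then apply \autoref{lem:extend_lin_t_action} to the tangent representation at $z$ to build a monoid $\overline{\bT}\cong\bA^e$. The Bia\l ynicki--Birula decomposition of \cite{Jelisiejew_Sienkiewicz_Bialynicki-Birula_decomposition_for_reductive_groups} then gives an open cell around $z$, and the smooth case of that theory identifies this cell with a rank $e$ affine bundle over an open piece of the fixed locus. Finally, shrink until the bundle is trivial.

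\textbf{Step 1: setup at $z$.} Replace $\bT$ by $\bT/\ker(\bT\acts X)$ as in \autoref{rem:effective_action}. Then $\dim\bT=\dim X-c(\bT\acts X)=\codim Z=e$. By \autoref{lem:min_lc_fixed}, $Z$ is $\bT$--invariant, and by \autoref{cor:dim_bound_torus_center} it is a top--dimensional component of $X^{\bT}$. Fix $z\in X^{\sm}\cap Z$. Since $X$ is smooth at $z$, \cite[Theorem 5.2]{Fogarty_Fixed_point_schemes} shows that $X^{\bT}$ is smooth near $z$ with $T_z(X^{\bT})=T_z(X)^{\bT}$. Hence $\dim T_z(X)^{\bT}=\dim Z$, and $Z$ is the only component of $X^{\bT}$ through $z$.

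For $V=T_z(X)$ we have $c(\bT\acts V)\ge \dim V-\dim\bT=\dim Z$, while \autoref{lem:dim_T_action} gives $\dim V^{\bT}\le c(\bT\acts V)$. Combined with $\dim V^{\bT}=\dim Z$, the main task is to show $c(\bT\acts V)=\dim Z$, i.e.\ that $\bT\acts T_z(X)$ has finite kernel. I would argue as follows. If a subtorus $\bK$ acts trivially on $T_z(X)$, then by Fogarty's tangent-space description $X^{\bK}$ is smooth at $z$ of full dimension, so $X^{\bK}=X$. Effectivity then forces $\bK$ to be trivial; this is the same argument as Claim 2 of \autoref{lem:dim_T_action}.

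\textbf{Step 2: extend to a monoid.} Apply \autoref{lem:extend_lin_t_action} to $V^{\vee}$, whose weights are the negatives of those of $V$. This gives $\bT\cong\bG_m^e$ under which $T_z^{\vee}(X)$ has only non-negative weights. Set $\overline{\bT}=\bA^e$, which makes the action on $T^\vee_z(X)$ extend to $\overline{\bT}$. By \cite[Proposition 1.6]{Jelisiejew_Sienkiewicz_Bialynicki-Birula_decomposition_for_reductive_groups}, $z$ lies in an open cell, i.e.\ $i_X\colon X^+\to X$ is an open immersion near $z$. The limit map $\pi_X$ is a retraction onto a neighbourhood of $z$ in $X^{\bT}$, which near $z$ is just $Z$.

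\textbf{Step 3: trivialise the bundle.} In the smooth case, the theorem of \cite{Jelisiejew_Sienkiewicz_Bialynicki-Birula_decomposition_for_reductive_groups} (or classically \cite{Bialynicki_Some_theorems_on_actions_of_algebraic_groups}) identifies $\pi_X$, on the part of the cell lying over $X^{\sm}\cap Z$, with a $\overline{\bT}$--equivariant affine bundle whose fiber is the positive part of the normal representation, here $\bA^e$. Two facts are needed:
\begin{itemize}
\item the relevant cell lies inside $X^{\sm}$ over a smooth neighbourhood of $z$ in $Z$, which holds because $X^{\sm}$ is open and $\bT$--stable and limits lie in $Z$;
\item the bundle can be trivialised Zariski locally.
\end{itemize}
For the second point, choose an affine open $\tU\ni z$ in $Z$ over which the bundle is trivial. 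Because the bundle is linearised by the torus weights, it is a vector bundle in equivariant coordinates, and any vector bundle is Zariski locally trivial. Put $U=\pi_X^{-1}(\tU)$. Then $U\cap Z=\tU$, since the only fixed points in the cell are the zero section, and the zero section is the inclusion $u\mapsto(u,0)$.

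I expect the main obstacle to be the justification in Step 3 that the open cell is a Zariski locally trivial $\bA^e$--bundle. This requires checking that the hypotheses of the smooth-case structure theorem of Jelisiejew--Sienkiewicz hold locally on $X^{\sm}$ even though $X$ itself is singular. My approach is to apply the decomposition to the $\bT$--stable smooth open subset $X^{\sm}$, using the functoriality of $X^+$ for $\bT$--stable open immersions, and then read off the bundle structure from the weights.
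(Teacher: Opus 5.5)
Your proposal is correct and follows essentially the same route as the paper. Like the paper, you identify $T_z(X)^{\bT}=T_z(Z)$ via Fogarty, use the Claim~2 argument to get finite kernel on $T_z(X)$, apply \autoref{lem:extend_lin_t_action} to $T_z^{\vee}(X)$ to build $\overline{\bT}=\bA^e$, invoke \cite[Proposition 1.6]{Jelisiejew_Sienkiewicz_Bialynicki-Birula_decomposition_for_reductive_groups} on the $\bT$--stable open $X^{\sm}$ to obtain an open cell, and shrink the smooth-case affine bundle over a small $\tU$; your explicit reduction to an effective action usefully makes precise the identification $\bT\cong\bG_m^e$ that the paper leaves implicit. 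The only loose point is your justification of Zariski-local triviality via ``vector bundle in equivariant coordinates,'' since Bia\l ynicki--Birula cells need not be vector bundles in general; you do not need this, because Zariski-local triviality is already part of the affine fiber bundle conclusion of \cite[Theorem 1.5]{Jelisiejew_Sienkiewicz_Bialynicki-Birula_decomposition_for_reductive_groups}, exactly as the paper uses it.
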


\begin{proof}
    As explained above, we want to construct $U$ by applying the Bia\l ynicki--Birula decomposition to the action of $\bT$ on $X$.  Note that $\bT$ acts on $X^{\sm}$ as automorphisms preserve the smooth locus. By \cite[Theorem 1.5]{Jelisiejew_Sienkiewicz_Bialynicki-Birula_decomposition_for_reductive_groups}, we have that $(X^{\sm})^{\bT}$ is smooth, and in particular it is the disjoint union of its irreducible components. By \autoref{cor:dim_bound_torus_center}, $Z$ is a top--dimensional component of $X^{\bT}$. So by \autoref{rem:dlt_smooth_locus}, $X^{\sm}\cap Z$ is a top--dimensional component of $(X^{\sm})^{\bT}$. Therefore, we have
    \begin{align*}
        T_z(X)^{\bT}=T_z(X^{\sm})^\bT\expl{=}{\cite[Theorem 5.2]{Fogarty_Fixed_point_schemes}} T_z((X^{\sm})^{\bT})= T_z(Z)
    \end{align*}
    and thus $\dim T_z(X)^\bT=\dim Z=c(\bT\acts X)$. So by \autoref{lem:dim_T_action}, we must have $\dim T_z(X)^\bT=c(\bT\acts X)$. Also, by Claim 2 in the proof of \autoref{lem:dim_T_action} we have $c(\bT\acts X)=c(\bT\acts T_z(X))$. It is then straightforward to see (e.g., by diagonalizing the action) that
    \begin{align*}
        \dim  T^{\vee}_z(X)^{\bT}=\dim  T_z(X)^{\bT}=c(\bT\acts T_z(X))=c(\bT\acts T^{\vee}_z(X)),
    \end{align*}
    so $T^{\vee}_z(X)$ satisfies the hypotheses of \autoref{lem:extend_lin_t_action}. Therefore, we obtain an isomorphism $\bT\cong\bG_m^e$ such that $\bG_m^e\acts T^{\vee}_z(X)$ has only non-negative weights. Hence, if under this isomorphism we compactify to $\overline{\bT}=\bA^e=(\bG_m\cup\{0\})^e$, then the $\bT$--action on $T^{\vee}_z(X)$ extends to a $\overline{\bT}$--action.
    
    We are now ready to apply the generalized Bia\l ynicki--Birula decomposition provided by \cite{Jelisiejew_Sienkiewicz_Bialynicki-Birula_decomposition_for_reductive_groups}. As by construction the $\bT$--action on $T^{\vee}_z (X)$ extends to a $\overline{\bT}$--action, \cite[Lemma 4.4]{Jelisiejew_Sienkiewicz_Bialynicki-Birula_decomposition_for_reductive_groups} gives that $T^{\vee}_z(X)$ has no outsider representations (see \cite[Definition 4.2]{Jelisiejew_Sienkiewicz_Bialynicki-Birula_decomposition_for_reductive_groups}). Hence, we can apply \cite[Proposition 1.6]{Jelisiejew_Sienkiewicz_Bialynicki-Birula_decomposition_for_reductive_groups} to $z\in X^{\sm}$, and thus obtain an open, affine and $\overline{\bT}$--stable neighborhood $U\subseteq X^{\sm}$ of $z$. By \cite[Proposition 5.7]{Jelisiejew_Sienkiewicz_Bialynicki-Birula_decomposition_for_reductive_groups} we then have in particular that $U^+=U$. Therefore we obtain the following diagram
    %
    % https://q.uiver.app/#q=WzAsNixbMCwwLCJYXntcXHJlZywrfSJdLFsxLDAsIlhee1xccmVnLFR9Il0sWzIsMCwiWF57XFxyZWd9Il0sWzIsMSwiWCJdLFsxLDEsIlheVCJdLFswLDEsIlheKyJdLFsyLDNdLFs0LDNdLFs1LDQsIlxccGlfWCIsMl0sWzAsNV0sWzEsNF0sWzEsMl0sWzAsMSwiXFxwaV97WF57XFxyZWd9fSJdXQ==
\[\begin{tikzcd}
	{U} & {U^{\bT}} & {U} \\
	{X^+} & {X^\bT} & X
	\arrow["{\pi_{U}}", from=1-1, to=1-2]
	\arrow[from=1-1, to=2-1]
	\arrow[from=1-2, to=1-3]
	\arrow[from=1-2, to=2-2]
	\arrow[from=1-3, to=2-3]
	\arrow["{\pi_X}"', from=2-1, to=2-2]
	\arrow[from=2-2, to=2-3]
\end{tikzcd}\]
    where the vertical maps are open immersions by \cite[Proposition 5.3]{Jelisiejew_Sienkiewicz_Bialynicki-Birula_decomposition_for_reductive_groups} (see also the list of properties preserved by $(-)^{+}$ after Proposition 1.6 in \emph{loc.cit.}). As $U$ is smooth, the map $\pi_U\colon U\to U^{\bT}$ has very nice properties by \cite[Theorem 1.5]{Jelisiejew_Sienkiewicz_Bialynicki-Birula_decomposition_for_reductive_groups}: $U^{\bT}$ is smooth and irreducible, and $\pi_U$ is an affine fiber bundle. As $U^{\bT}=X^{\bT}\cap U$ and as $X^{\sm}\cap Z$ is the only irreducible component of $(X^{\sm})^{\bT}$ containing $z$, we have $z\in U^{\bT}\subseteq Z$. Finally, as $\pi_U$ is an affine fiber bundle, there exists an open neighborhood $\tU\subseteq U^{\bT}$ of $z$ such that $\pi_U^{-1}(\tU)\cong\tU\times\bA^e$, compatible with $\pi_U$ and the projection. By replacing $U$ with $\pi_U^{-1}(\tU)$, we are done.
\end{proof}

\section{Proof of main theorem}
We now come to the proof of \autoref{thmA}. As we crucially needed $(X,\Delta)$ to be dlt to apply the Bia\l ynicki--Birula decomposition, we first prove it under this additional hypothesis. Moreover, we can give an explicit dense subset over which fibers are pairwise birational.

\begin{proposition}\label{prop:dlt_case_arbitrary_A}
    Let $(X,\Delta)$ be a dlt log Calabi--Yau pair and suppose there is a torus action $\bT\acts(X,\Delta)$ and a minimal lc center $Z$ with $\dim Z=c(\bT\acts X)$. Let $\alpha\colon X\to A$ be a fibration to an abelian variety $A$. Then $\alpha$ is generically birationally isotrivial.
\end{proposition}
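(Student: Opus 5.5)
We plan to combine the local product structure of \autoref{lem:BB-decomp} with the weak Beauville--Bogomolov decomposition for the klt pair on $Z$. Fix $z\in X^{\sm}\cap Z$, which exists by \autoref{rem:dlt_smooth_locus}, and let $U\cong\tU\times\bA^e$ be the open neighbourhood from \autoref{lem:BB-decomp}, with $\tU=U\cap Z$ open in $Z$. By \autoref{lem:min_lc_fixed}, $Z$ is normal and $(Z,\Diff^*_Z\Delta)$ is a klt pair with $K_Z+\Diff^*_Z\Delta\sim_{\bQ}0$. By the klt weak Beauville--Bogomolov decomposition (\cite{Patakfalvi_Zdanowicz_On_the_Beauville-Bogomolov_decomposition_in_characteristic_p,Matsumura_Wang_Structure_theorem_for_projective_klt_pairs_with_nef_anti-canonical_divisor}), $\alb_Z$ is an isotrivial fibration. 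Hence for every morphism $Z\to B$ to an abelian variety through which it factors, the composition $Z\to\Alb_Z\to B$ is still generically isotrivial after Stein factorization. We will apply this to $\alpha|_Z$.

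\textbf{Step 1: $\alpha|_Z\colon Z\to A$ is surjective and "equals" $\alpha$ on $U$.} The projection $p\colon U\cong\tU\times\bA^e\to\tU$ has a section and fibers $\bA^e$, which are quasi-rcc. So by \autoref{lem:alb_iso_rcc}\autoref{lem:alb_iso_rcc_section}, $\Alb(p)$ is an isomorphism. Moreover $\alb_U$ contracts the fibers of $p$, so $\alb_U=\alb_U\circ s\circ p$, where $s$ is the zero section. Applying this to $\alpha|_U$, which factors through $\alb_U$, we get $\alpha|_U=\alpha|_{\tU}\circ p$. Since $U$ is dense in $X$ and $\alpha$ is surjective, $\alpha|_{\tU}$ is dominant. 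As $Z$ is projective, $\alpha|_Z$ is surjective. We also get that $\alpha$ is generically constant along the $\bA^e$ directions.

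\textbf{Step 2: comparing fibers.} For a general $a\in A$, the open subset $U\cap X_a$ of $X_a$ is nonempty and isomorphic to $(\tU\cap Z_a)\times\bA^e$. Since $\alpha$ is a fibration, the general fiber $X_a$ is irreducible (generic smoothness and connectedness, plus normality of $X$). So $X_a$ is birational to $Z_a\times\bA^e$, and hence to $Z_a\times\bP^e$, provided $\tU\cap Z_a$ is dense in $Z_a$. Now $Z_a$ may be disconnected, since $Z\to A$ need not have connected fibers. But the map $Z\to A$ factors through $\alb_Z$ and the induced map $\Alb_Z\to A$ is a surjective homomorphism up to translation. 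Therefore the fibers $Z_a$ are disjoint unions of fibers of a fibration $Z\to\Alb_Z\times_A\{a\}$, which is a translate of a fixed abelian subvariety $K$. Translation by $\Alb_Z$ (étale locally) shows these families are isotrivial. Concretely, $Z_a\cong F\times K'$ generically, where $F$ is the fiber of $\alb_Z$ and $K'$ is a fixed étale-locally trivial bundle. Irreducibility of $X_a$ forces $Z_a\cap\tU$ to be irreducible. We expect this, together with isotriviality of $\alb_Z$, to give that the general $Z_a$ (or the component meeting $\tU$) are pairwise isomorphic. Consequently the $X_a$ are pairwise birational for $a$ in a dense open set.

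\textbf{Main obstacle.} The delicate point is Step 2: ensuring that for general $a$ the fiber $Z_a$ is irreducible, or at least that its component meeting $\tU$ is isomorphic across $a$, and that $\tU\cap Z_a$ is dense in it. We would first try to show $\Alb_Z\to A$ has connected kernel via $\Alb(p)$: we have $\Alb_U\cong\Alb_{\tU}$ and $\Alb_{\tU}\cong\Alb_Z$ by \autoref{lem:alb_iso_dlt}, since $Z$ is klt. The fibration $\alpha$ factors as $X\dashrightarrow\Alb_U\to A$, and $\alpha_*\cO_X=\cO_A$ should force $\Alb_U\to A$ to have connected fibers. Then $Z\to A$ is the composition of the isotrivial $\alb_Z$ with a homomorphism with connected fibers, and its fibers are étale-locally $F\times\ker$, hence pairwise isomorphic. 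If connectedness fails, the fallback is to pass to an étale cover of $A$ and work componentwise.
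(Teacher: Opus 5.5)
Your setup follows the paper: \autoref{lem:BB-decomp} gives $U\cong\tU\times\bA^e$, the Albanese shows $\alpha|_U$ factors through the projection $p$, so $X_a\cap U\cong(Z_a\cap\tU)\times\bA^e$, and everything reduces to comparing the fibers of $\beta\coloneqq\alpha|_Z\colon Z\to A$. Step 1 is correct. Deriving surjectivity of $\beta$ from $\alpha|_U=\alpha|_{\tU}\circ p$ is a clean substitute for the citation the paper uses.

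Your "main obstacle" argument that $h\colon\Alb_X\cong\Alb_U\to A$ has connected fibers also works once you note that $\alb_X$ is surjective. One way to see this: $\Alb(Z\hookrightarrow X)$ is an isomorphism via $\tU\subseteq U$ and \autoref{lem:alb_iso_dlt}, and $\alb_Z$ is surjective. It follows that $Z_a=\alb_Z^{-1}(h^{-1}(a))$ is connected, hence irreducible for general $a$, so $\tU\cap Z_a$ is dense and $X_a$ is birational to $Z_a\times\bP^e$.

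\textbf{The gap is the last step: why are the general $Z_a$ pairwise isomorphic (or even birational)?} You assert this twice as a consequence of isotriviality of $\alb_Z$. In the first paragraph you write "hence for every morphism $Z\to B$ \dots\ generically isotrivial"; at the end you write "étale-locally $F\times\ker$, hence pairwise isomorphic". Neither inference is valid. Each $Z_a$ is an étale-locally trivial $F$-bundle over a translate of $\ker h$. Étale-locally trivial bundles with the same fiber over the same base need not be isomorphic, nor even birational: already an $E$-torsor over an elliptic curve can be a non-split abelian surface rather than a product. So local triviality over $\Alb_Z$ says nothing about how the restrictions to different translates of $\ker h$ compare. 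Your claim $Z_a\cong F\times K'$ is likewise unsupported.

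\textbf{The missing input is the klt structure theorem applied to $\beta$ itself, not only to $\alb_Z$.}
\begin{itemize}
\item Since $h$ and $\alb_Z$ are fibrations, $\beta_*\cO_Z=h_*(\alb_Z)_*\cO_Z=\cO_A$, so $\beta$ is a fibration.
\item As $K_A=0$, you have $K_{Z/A}=K_Z$, so $K_{Z/A}+\Diff^*_Z\Delta\sim_{\bQ}0$.
\item Then \cite[Theorem 4.1]{Matsumura_Wang_Structure_theorem_for_projective_klt_pairs_with_nef_anti-canonical_divisor} (or \cite[Theorem A.13]{Patakfalvi_Zdanowicz_On_the_Beauville-Bogomolov_decomposition_in_characteristic_p} in the $\bQ$--factorial case) shows $\beta$ is isotrivial, hence $Z_a\cong Z_{a'}$.
\end{itemize}
This is exactly how the paper concludes.

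Alternatively, you could use a global splitting $Z\times_{\Alb_Z}\widetilde{B}\cong F\times\widetilde{B}$ after an isogeny $\widetilde{B}\to\Alb_Z$. Its translation action on $Z$ moves the fibers of $\beta$ onto one another. But that is strictly more than "$\alb_Z$ is isotrivial" and needs its own reference. With either fix, your fallback of passing to an étale cover of $A$ is unnecessary.
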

\begin{proof}
    By \autoref{def:Albanese}, there exists $h\colon\Alb_X\to A$ such that $\alpha=h\circ\alb_X$.
    Note that by \cite[Theorem 3.3]{Bernasconi_Filipazzi_Patakfalvi_Tsakanikas_A_counterexample_to_the_log_canonical_Beauville-Bogomolov_decomposition} and \autoref{rem:dlt_smooth_locus}, $\alpha(X^{\sm}\cap Z)$ contains a dense open subset of $A$. Furthermore, the general fiber is irreducible (combining \stacksproj{038I} and \cite[E.1.(9)]{Gortz_Wedhorn_Algebraic_geometry_I}). Hence, there exists a dense open subset $V\subseteq A$ contained in $\alpha(X^{\sm}\cap Z)$ over which all closed fibers are integral.
    
    Let $a\in V$ be an arbitrary closed point and let $z\in X^{\sm}\cap Z$ be a preimage under $\alpha$. Then by \autoref{lem:BB-decomp}, we obtain an open neighborhood $U\subseteq X^{\sm}$ of $x$, such that we have an isomorphism $U\cong\tU\times\bA^e$, where $\tU=U\cap Z$ and $e=\codim Z$. We then obtain the following diagram:
    %
    % https://q.uiver.app/#q=WzAsOCxbMCwwLCJYIl0sWzEsMCwiVSJdLFsyLDAsIlxcd2lkZXRpbGRle1V9Il0sWzMsMCwiViJdLFszLDEsIlxcQWxiX1YiXSxbMiwxLCJcXEFsYl97XFx3aWRldGlsZGV7VX19Il0sWzEsMSwiXFxBbGJfVSJdLFswLDEsIlxcQWxiX1giXSxbMSwwLCIiLDAseyJzdHlsZSI6eyJ0YWlsIjp7Im5hbWUiOiJob29rIiwic2lkZSI6ImJvdHRvbSJ9fX1dLFsxLDJdLFsyLDMsIiIsMix7InN0eWxlIjp7InRhaWwiOnsibmFtZSI6Imhvb2siLCJzaWRlIjoidG9wIn19fV0sWzMsNF0sWzUsNCwiXFxzaW0iLDJdLFs2LDUsIlxcc2ltIiwyXSxbNiw3LCJcXHNpbSJdLFswLDddLFsxLDZdLFsyLDVdXQ==
\[\begin{tikzcd}
	X & U & {\widetilde{U}} & Z \\
	{\Alb_X} & {\Alb_U} & {\Alb_{\widetilde{U}}} & {\Alb_Z}
	\arrow[from=1-1, to=2-1]
	\arrow[hook', from=1-2, to=1-1]
	\arrow[from=1-2, to=1-3]
	\arrow[from=1-2, to=2-2]
	\arrow[hook, from=1-3, to=1-4]
	\arrow[from=1-3, to=2-3]
	\arrow[from=1-4, to=2-4]
	\arrow["\sim", from=2-2, to=2-1]
	\arrow["\sim"', from=2-2, to=2-3]
	\arrow["\sim"', from=2-3, to=2-4]
\end{tikzcd}\]
    The bottom outer arrows are isomorphisms by \autoref{lem:alb_iso_dlt} (note that $(Z,\Diff^*_Z\Delta)$ is klt, so in particular dlt), and the bottom middle arrow is an isomorphism by \autoref{lem:alb_iso_rcc} (as $U\to\widetilde{U}$ admits a section, namely the inclusion $\tU\subseteq U$). Therefore, we obtain a commutative diagram
    %
    % https://q.uiver.app/#q=WzAsNyxbMCwxLCJYIl0sWzEsMSwiVSJdLFszLDEsIlxcd2lkZXRpbGRle1V9Il0sWzQsMSwiWiJdLFsyLDIsIkIiXSxbMiwwLCJcXHdpZGV0aWxkZXtVfVxcdGltZXNcXGJBXmUiXSxbMiwzLCJBIl0sWzEsMCwiIiwwLHsic3R5bGUiOnsidGFpbCI6eyJuYW1lIjoiaG9vayIsInNpZGUiOiJib3R0b20ifX19XSxbMSwyXSxbMiwzLCIiLDAseyJzdHlsZSI6eyJ0YWlsIjp7Im5hbWUiOiJob29rIiwic2lkZSI6InRvcCJ9fX1dLFswLDQsIlxcYWxiX1giLDJdLFsxLDRdLFsyLDRdLFszLDQsIlxcYWxiX1YiXSxbMSw1LCJcXHNpbSJdLFs1LDIsIlxccHJfMSJdLFs0LDYsImgiXSxbMCw2LCJcXGFscGhhIiwyLHsiY3VydmUiOjN9XSxbMyw2LCJcXGJldGEiLDAseyJjdXJ2ZSI6LTN9XV0=
\[\begin{tikzcd}
	&& {\widetilde{U}\times\bA^e} \\
	X & U && {\widetilde{U}} & Z \\
	&& B \\
	&& A
	\arrow["{\pr_1}", from=1-3, to=2-4]
	\arrow["{\alb_X}"', from=2-1, to=3-3]
	\arrow["\alpha"', bend right = 20, from=2-1, to=4-3]
	\arrow["\sim", from=2-2, to=1-3]
	\arrow[hook', from=2-2, to=2-1]
	\arrow[from=2-2, to=2-4]
	\arrow[bend left = 10, from=2-2, to=3-3]
	\arrow[hook, from=2-4, to=2-5]
	\arrow[bend right = 10, from=2-4, to=3-3]
	\arrow["{\alb_Z}", from=2-5, to=3-3]
	\arrow["\beta", bend left = 20, from=2-5, to=4-3]
	\arrow["h", from=3-3, to=4-3]
\end{tikzcd}\]
    where $B$ is the Albanese of all the varieties of the top row. Then, from this diagram we obtain 
    \begin{align*}
        X_a\cap U\cong (Z_a\cap\widetilde{U})\times\bA^e,
    \end{align*}
    and note that $X_a\cap U$ and $Z_a\cap\tU$ are non-empty as they contain $z$. Therefore, we have $X_a\simeq Z_a\times\bP^e$. Now as $\alpha$ and $\alb_X$ are fibrations, we obtain that $h\colon B\to A$ is a fibration as well, and thus $\beta\coloneq h\circ\alb_Z\colon Z\to A$ is a fibration. Furhermore, by \autoref{lem:min_lc_fixed}.\autoref{lem:min_lc_fixed_stable}, the pair $(Z,\Diff^*_Z\Delta)$ is klt and $K$--trivial. Note that by \cite[Section 2.8.3]{Patakfalvi_Zdanowicz_On_the_Beauville-Bogomolov_decomposition_in_characteristic_p} and because the canonical divisor of $A$ is trivial, we have $K_{Z/A}=K_Z$ (see Section 2.8 in \emph{loc.cit.} for the definition of the relative canonical divisor $K_{Z/A}$ of $Z$ over $A$). So $K_{Z/A}+\Diff^*_Z\Delta$ is torsion. Hence, by \cite[Theorem 4.1]{Matsumura_Wang_Structure_theorem_for_projective_klt_pairs_with_nef_anti-canonical_divisor} (resp.\ \cite[Theorem A.13]{Patakfalvi_Zdanowicz_On_the_Beauville-Bogomolov_decomposition_in_characteristic_p} in the $\bQ$--factorial case) we obtain that $\beta$ is isotrival. Hence, if $a'\in V$ is another point, we have $Z_a\cong Z_{a'}$. Therefore, we obtain
    \begin{align*}
        X_a\simeq Z_a\times\bP^e\cong Z_{a'}\times\bP^e\simeq X_{a'},
    \end{align*}
    so we are done.
\end{proof}

We now come to the proof of \autoref{thmA}. At this point, all that is left to do is to pass to a dlt model with \autoref{lem:dlt_mod} and then apply \autoref{prop:dlt_case_arbitrary_A}.

\begin{proof}[Proof of \autoref{thmA}]
    Let $g\colon(X^{\dlt},\Delta^{\dlt})\to (X,\Delta)$ be the $\bT$--equivariant dlt model of $X$ provided by \autoref{lem:dlt_mod}. As in the proof of \autoref{lem:min_lc_fixed}.\autoref{lem:min_lc_fixed_invariant}, let $S$ be an lc center of $(X^{\dlt},\Delta^{\dlt})$ which is minimal for inclusion among lc centers dominating $Z$, and note that $S$ is actually a minimal lc center of $(X^{\dlt},\Delta^{\dlt})$. Notice that we have the chain of inequalities
    \begin{align*}
        \dim Z\leq\dim S\explshift{-1.5em}{\leq}{\autoref{lem:min_lc_fixed}.\autoref{lem:min_lc_fixed_invariant}}\dim X^{\dlt,\bT}\expl{\leq}{\autoref{lem:dim_T_action}} c(\bT\acts X^{\dlt}) \expl{=}{\autoref{lem:dlt_mod}.\autoref{lem:dlt_mod_T-equivariant}} c(\bT\acts X) = \dim Z
    \end{align*}
    and thus we must have $\dim S=\dim Z$. In particular, the lc center $S$ of the pair $(X^{\dlt},\Delta^{\dlt})$ satisfies the hypotheses of \autoref{thmA} for the action $\bT\acts(X^{\dlt},\Delta^{\dlt})$. Furthermore, as $g_*\cO_{X^{\dlt}}=\cO_X$, the map $\alpha=\alb_X\circ g$ is a fibration of $X^{\dlt}$ to the abelian variety $\Alb_X$. By \autoref{prop:dlt_case_arbitrary_A}, we hence obtain that $\alpha$ is generically birationally isotrivial. Finally, as $g$ is birational and $\alpha=\alb_X\circ g$, we obtain that also $\alb_X$ is generically birationally isotrivial.
\end{proof}

\begin{remark}\label{rem:thmA_general_A}
    By the same proof, \autoref{thmA} holds for any fibration to an abelian variety.
\end{remark}

\section{Examples}

In this section we construct an example satisfying the hypotheses of \autoref{thmA} for which the Albanese morphism is neither generically isotrivial, nor birationally isotrivial. To warm up, we start with a simple example where the Albanese morphism is generically isotrivial but not birationally isotrivial.
\begin{example}[\protect{\cite[Example 3.1]{Bernasconi_Filipazzi_Patakfalvi_Tsakanikas_A_counterexample_to_the_log_canonical_Beauville-Bogomolov_decomposition}}]\label{ex:gen_isotr_not_isotr}
    Consider the pair
    \begin{align*}
        (X,\Delta)\coloneqq (E\times\bP^1,E\times\{0\}+E\times\{\infty\}),
    \end{align*}
    where $E$ is an elliptic curve, and denote by $\pi_E\colon X\to E$ the projection. Let $f\colon Y\to X$ be the blow-up of a closed point $x\in E\times\{0\}$, and let $\Delta_Y$ be the strict transform of $\Delta$, so that $(Y,\Delta_Y)$ and $(X,\Delta)$ are crepant birational. Furthermore, we have $\alb_Y=\pi_E\circ f$.

    Finally, note that the obvious action $\bG_m\acts E\times\bP^1$, i.e. acting on the second factor by scaling, induces an action $\bG_m\acts (X,\Delta)$. Furthermore, the action lifts to $(Y,\Delta_Y)$, as we blew up a fixed point. The general orbit is one--dimensional, so the action has complexity $1$. As the components of $(Y,\Delta_Y)$ are minimal lc centers, the hypotheses of \autoref{thmA} are fulfilled.

    Now let us analyze the fibers of $\alb_Y$: over $t\neq\pi_E(x)$, the fiber $Y_t$ is isomorphic to $\bP^1$, and in particular, $\alb_Y$ is generically isotrivial. However, the fiber over $t=\pi_E(x)$ is isomorphic to two projective lines intersecting transversely at a point, so $\alb_Y$ is not birationally isotrivial.
\end{example}

Let us now construct an example where the Albanese morphism is not birationally isotrivial and not generically isotrivial. By the following remark, it has to be of dimension greater than or equal to $3$.

\begin{remark}\label{rem:alb_gen_isotr}
    If $(X,\Delta)$ is a log Calabi--Yau pair with an effective torus action $\bT\acts (X,\Delta)$ such that $\dim\bT+\dim\Alb_X=\dim X$, then $\alb_X$ is generically isotrivial. Indeed, as the induced action $\bT\acts\Alb_X$ is trivial, $\bT$ acts on the fibers of $\alb_X$. The fibers are of dimension $\dim\bT$, and as the general orbit of $\bT\acts X$ has codimension $c(\bT\acts X)=\dim X-\dim\bT$, we obtain that a general fiber of $\alb_X$ is toric. Let $\{T_i\}_{i\in\bN}$ be an enumeration of the projective toric varieties of dimension $\dim\bT$ up to isomorphism. Then the subset $S_i=\{a\in\Alb_X\mid X_a\cong T_i\}$ is constructible (as it is the image of $\Isom_{\Alb_X}(X,T_i\times_{\bC}\Alb_X)\to\Alb_X$), and $\bigcup_i S_i$ contains a non-empty open subset of $\Alb_X$. Hence, one of the $S_i$ must be dense, and thus itself contains a non-empty open subset of $\Alb_X$. That is, the general fiber of $\alb_X$ is isomorphic to $T_i$. In particular, if $X$ is a surface, then for $\alb_X$ not to be isotrivial we must have $\dim\Alb_X=1$, and so if we have a non-trivial action $\bG_m\acts X$, $\alb_X$ is generically isotrivial.
\end{remark}

\begin{example}\label{ex:bir_isotr_not_gen_isotr}
    The construction is similar to \autoref{ex:gen_isotr_not_isotr}, but we replace the elliptic curve by a bielliptic surface $S$. More precisely, in the terminology of \cite[Proposition 1.2]{Serrano_Divisors_of_bielliptic_surfaces_and_embeddings_in_P4}, let $S$ be of Type 1, i.e., 
    \begin{align*}
        S=\factor{A\times B}{G},
    \end{align*}
    where $A$ and $B$ are elliptic curves, $G=\bZ/2\bZ$, acting on $A$ by translations (i.e., $a\mapsto a+P$ for a fixed $2$--torsion point $P\in A$) and on $B$ by symmetry (i.e., $\iota\colon b\mapsto -b$). Furthermore, we take $B$ to be general, so that $\Aut(B)$ is generated by translations and $\iota$ (e.g., by \cite[Theorem VI.16]{Beauville_Complex_algebraic_surfaces}). 
    
    Note that $S$ admits two elliptic fibrations: on the one hand, the projection $A\times B\to A$ induces an elliptic fibration
    \begin{align*}
        \Phi\colon S\to A'\coloneqq A/\langle a\mapsto a+P\rangle,
    \end{align*}
    whose fibers are isomorphic to $B$, which is the Albanese morphism of $S$. On the other hand, the projection $A\times B\to B$ induces an elliptic fibration
    \begin{align*}
        \Psi\colon S\to \bP^1\cong B/\langle b\mapsto -b\rangle,
    \end{align*}
    whose smooth fibers are isomorphic to $A$.

    Now consider the log Calabi--Yau pair
    \begin{align*}
        (X,\Delta)\coloneqq (S\times\bP^1,S\times\{0\}+S\times\{\infty\}),
    \end{align*}
    and denote by $\pi_S\colon X\to S$ the projection to $S$. Note that the Albanese morphism of $X$ is just the composition $\Phi\circ\pi_S$, and so the fiber $X_t$ over any $t\in A'$ is just $S_t\times\bP^1\cong B\times\bP^1$. The idea is to blow up $X$ at some curves in $S\times\{0\}$ to break the isotriviality.

    Let $A_0'$ be the half fiber of $\Psi$ over the point $0\cdot G$ in $B/G=\bP^1$, so that $A_0'$ is a section of $\Phi$ (it is the image of $A\times\{0\}$ under the quotient map $A\times B\to S$). Let $C$ be a smooth irreducible curve in $S$ intersecting $A_0'$ transversely and which dominates both $A'$ and $\bP^1$. Then the strict transform $\overline{A_0'}$ of $A_0'\times\{0\}$ in $\Bl_{C\times\{0\}}X$ is smooth, and so
    \begin{align*}
        f\colon \underbrace{\Bl_{\overline{A_0'}}\Bl_{C\times\{0\}}X}_{Y\coloneqq}\to X
    \end{align*}
    is smooth. Let $\Delta_Y$ be the strict transform of $\Delta$ in $Y$. Then the pair $(Y,\Delta_Y)$ is crepant birational to $(X,\Delta)$, and in particular it is log Calabi--Yau. Furthermore, we have $\alb_Y=\Phi\circ \pi_S\circ f$, as the fibers of $f$ and $\pi_S$ are rationally chain connected.

    Let us prove that $\alb_Y$ is not generically isotrivial. For $t\in A'$, we denote the point of intersection of $A_0'$ and $S_t$ by $0$, and we choose a group structure on $S_t$ with origin $0$. Note that for general $t\in A'$, $C_t$ consists of $n$ distinct points in $S_t\setminus\{0\}$. Hence, the fiber $Y_t$ of $\alb_Y$ over $t$ is given by
    \begin{align*}
        Y_t=\Bl_{(\{0\}\cup C_t)\times\{0\}} (S_t\times\bP^1).
    \end{align*}
    Suppose now that for general $t,t'\in A'$ we have an isomorphism $Y_t\cong Y_{t'}$. As $S_t\times\bP^1$ and $S_{t'}\times\bP^1$ are both isomorphic to $B\times\bP^1$, which is in particular minimal, the above isomorphism induces an isomorphism $S_t\times\bP^1\cong S_{t'}\times\bP^1$ which restricts to a bijection between $(\{0\}\cup C_t)\times\{0\}$ and $(\{0\}\cup C_{t'})\times\{0\}$. Note also that we have
    \begin{align*}
        \Aut(B\times\bP^1)=\Aut(B)\times\Aut(\bP^1).
    \end{align*}
    Indeed, this follows from \cite[Theorem 2.(5)]{Maruyama_On_automorphism_groups_of_ruled_surfaces}, by observing that the natural map $\Aut(B)\to \Aut(B\times\bP^1)$ provides a section of the exact sequence given by \cite[Lemma 6]{Maruyama_On_automorphism_groups_of_ruled_surfaces}. Therefore, the isomorphism $S_t\times\bP^1\cong S_{t'}\times\bP^1$ induces an isomorphism $S_t\cong S_{t'}$ under which $\{0\}\cup C_t$ is mapped to $\{0\}\cup C_{t'}$.

    Write now $\{0\}\cup C_t=\{p_{0,t},\ldots,p_{n,t}\}$, and consider the effective divisor
    \begin{align*}
        D_t\coloneqq\sum_{i>j}[\Psi(p_{i,t}-p_{j,t})]
    \end{align*}
    on $\bP^1$ (here $p_{i,t}-p_{j,t}$ denotes the difference of the two points in the group structure chosen on $S_t$). As we took $B$ so that $\Aut(B)$ is generated by translations and inversion, observe that the map
    \begin{align*}
        B\times B&\to \bP^1=B/\langle b\mapsto -b\rangle\\
        (b,b') &\mapsto \pm(b-b')
    \end{align*}
    is invariant under the diagonal action of $\Aut(B)$. Therefore, the existence of an isomorphism $S_t\cong S_{t'}$ inducing a bijection between $\{p_{i,t}\}$ and $\{p_{i,t'}\}$ forces $D_t=D_{t'}$ for general $t,t'\in A'$ (as $\Psi|_{S_t}$ is the quotient map of $S_t$ under the action by inversion). But then as $A'$ is a curve, this implies that the set $\{D_t\}_{t\in A'}$ is finite. As furthermore $\Psi(C_t)\subseteq\Supp D_t$, we  obtain that $\Psi(C)$ is finite, which is a contradiction. Hence $\alb_Y$ is not generically isotrivial.
    
    On the other hand, note that the action $\bG_m\acts (X,\Delta)$ on the second factor lifts to $Y$, as the centers of the blow--ups are in the fixed locus. As the action is effective, it has complexity $2$. Furthermore, the components of $\Delta_Y$ are disjoint, so they are minimal lc centers. Hence the hypotheses of \autoref{thmA} are fullfilled.

    Note however that $\alb_Y$ is birationally isotrivial, as all fibers are birational to $B\times\bP^1$. To obtain an example which is not birationally isotrivial, we proceed in a similar fashion as in \autoref{ex:gen_isotr_not_isotr}: choose some $x\in A'=\Alb_S$, then the fiber $S_x$ is isomorphic to $B$, so in particular it is smooth. Consider
    \begin{align*}
        g\colon \underbrace{\Bl_{S_x\times\{\infty\}} Y}_{Z\coloneqq}\to Y
    \end{align*}
    and let $E$ be the exceptional divisor. Then we have $\alb_Z=\alb_Y\circ g$, and the fiber over $x$ is the union of $Y_x$ and $E$, so it is not irreducible. As all other fibers are irreducible, we conclude that $\alb_Z$ is not birationally isotrivial. For $t\neq x$, we have $Z_t\cong Y_t$, and thus $Z$ is not generically isotrivial.

    Finally, if $\Delta_Z$ is the strict transform of $\Delta_Y$, it is straightforward to check that $(Z,\Delta_Z)$ still satisfies the hypotheses of \autoref{thmA}, as it is crepant birational to $(Y,\Delta_Y)$ and we blew--up a curve in the fixed locus of $\bG_m\acts Y$.
\end{example}

\bibliographystyle{amsalpha}
\bibliography{bibliography}
\end{document}